\theoremstyle{plain}
\newtheorem{theorem}{Theorem}[section]
\newtheorem*{theorem1}{Theorem}
\newtheorem{lemma}[theorem]{Lemma}
\newtheorem{prop}[theorem]{Proposition}
\newtheorem{cor}[theorem]{Corollary}
\theoremstyle{definition}
\newtheorem{definition}[theorem]{Definition}
\theoremstyle{remark}
\newtheorem{remark}[theorem]{Remark}
\DeclareMathOperator{\C}{\mathbb{C}}
\DeclareMathOperator{\R}{\mathbb{R}}
\DeclareMathOperator{\Lc}{\mathcal{L}}
\DeclareMathOperator{\stab}{Stab}
\DeclareMathOperator{\unstab}{Unstab}
\DeclareMathOperator{\spn}{span}
\DeclareMathOperator{\cK}{\mathcal{K}}
\DeclareMathOperator{\cH}{\mathcal{H}}
\DeclareMathOperator{\cF}{\mathcal{F}}
\DeclareMathOperator{\cG}{\mathcal{G}}
\DeclareMathOperator{\cJ}{\mathcal{J}}
\newcommand\lf{\mbox{\textleaf}}
\title{Arboreal Singularities in Weinstein Skeleta}
\author{Laura Starkston}
\begin{document}
	\begin{abstract}
		We study the singularities of the isotropic skeleton of a Weinstein manifold in relation to Nadler's program of arboreal singularities. By deforming the skeleton via homotopies of the Weinstein structure, we produce a Morse-Bott* representative of the Weinstein homotopy class whose stratified skeleton determines its symplectic neighborhood. We then study the singularities of the skeleta in this class and show that after a certain type of generic perturbation either (1) these singularities fall into the class of (signed Lagrangian versions of) Nadler's arboreal singularities which are combinatorially classified into finitely many types in a given dimension or (2) there are singularities of tangency in associated front projections. We then turn to the singularities of tangency to try to reduce them also to collections of arboreal singularities. We give a general localization procedure to isolate the Liouville flow to a neighborhood of these non-arboreal singularities, and then show how to replace the simplest singularities of tangency (those of type $\Sigma^{1,0}$) by arboreal singularities.
	\end{abstract}
	\maketitle
	
	\section{Introduction}
	
	Weinstein manifolds are open symplectic manifolds compatible with Morse theory, and are deformation equivalent to Stein manifolds \cite{E}. They were introduced by Eliashberg and Gromov \cite{EG} as a special class of convex symplectic manifolds compatible with the symplectic handle construction of Weinstein \cite{W}. A Weinstein manifold contains a core isotropic \emph{skeleton} consisting of the set of points which do not escape to infinity under the Liouville flow--equivalently (in the Weinstein case), the stable manifolds of all the zeros of the vector field. Any arbitrarily small neighborhood of the skeleton completely recovers the Weinstein manifold, but the neighborhood cannot always be recovered from the skeleton itself near complicated singularities that typically develop. A longstanding hope has been to show that each Weinstein manifold can be represented by a Weinstein structure with a simple enough class of singularities so that the neighborhood can be recovered directly from the skeleton, and the singularities can be combinatorially classified by a finite list of types in each dimension. In this paper we make significant progress in this direction.
	
	The class of singularities we aim to aim for our skeleton to have, was proposed by Nadler \cite{N1}, inspired by mirror symmetry. Kontsevich proposed a method to calculate the Fukaya category of a Weinstein manifold in terms of the singular topology of its skeleton, provided the singularities fall into a certain class which in this paper are referred to as $A_n$ singularities \cite{K}. Nadler determined that an extended class of singularities was necessary and accordingly defined \emph{arboreal singularities} and calculated microlocal sheaf invariants for this class \cite{N1}, with the goal of combinatorially calculating certain versions of Fukaya categories. From this perspective, calculations are performed using local front projections of singular Legendrian submanifolds. The global invariants are obtained algebraically as sheaf theoretic invariants built from this local data. In this article, we take a more geometric global perspective on the singularities of a skeleton, and find that the arboreal singularities arise naturally in this context as well. From our perspective, the $A_n$ singularities occur naturally where (Lagrangian thickened) cores of handles of sequentially decreasing index meet (pictorially demonstrated on the left side of figure \ref{fig:A3Handles}). The larger class of arboreal singularities are necessary to deal with normal crossings involving multiple handles of the same index (see the right side of figure \ref{fig:A3Handles}). See sections \ref{s:genclass} and \ref{s:model} for a complete description of how to understand all arboreal singularities as naturally occuring in the skeleta of Weinstein manifolds.
	
	\begin{figure}
		\centering
		\includegraphics[scale=.5]{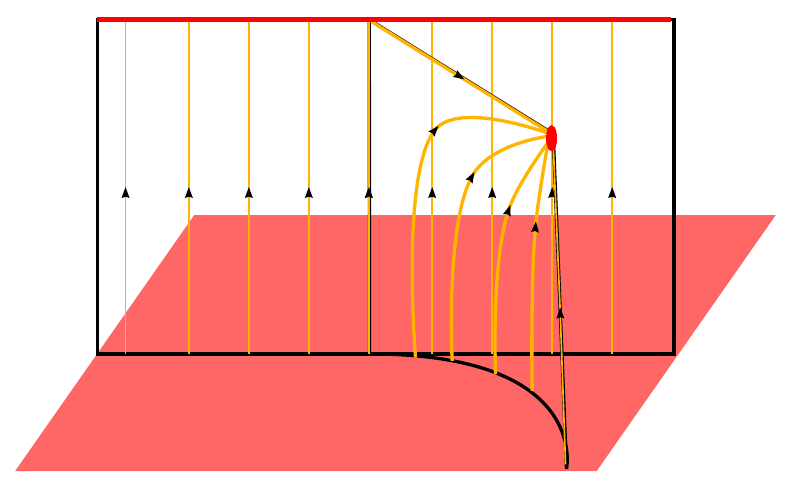} \hspace{1cm} \includegraphics[scale=.5]{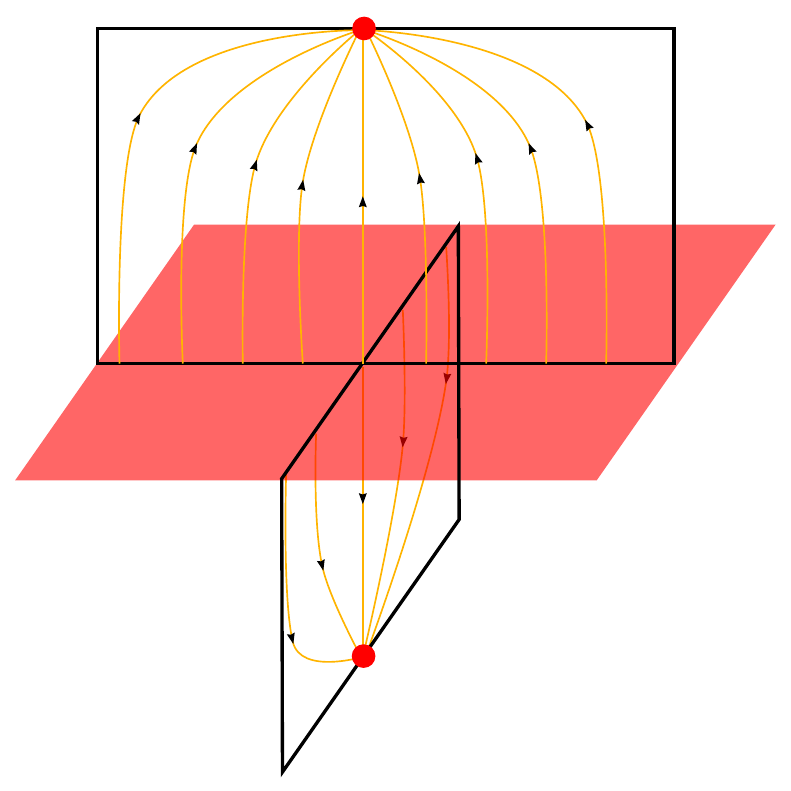}
		\caption{Left: an $A_3$ singularity as the union of the cores of an index $2$ handle and a Lagrangian thickened index $1$ and index $0$ handle. The red parts are critical loci. Right: an arboreal singularity occuring as the interaction of the cores of two index $2$ handles with a Lagrangian thickened index $0$.}
		\label{fig:A3Handles}
	\end{figure}
	
	One significant motivation and goal in the skeleton program is to enable us to define and calculate invariants of symplectic manifolds (which have often involved holomorphic curve counts with a range of foundational and computational challenges), by more topological methods. Sheaf-theoretic invariants have been proposed as alternative methods for computing Floer-theoretic invariants, and efforts continue to progress to establish equality between these two types of invariants in various versions (see work of Nadler and Zaslow \cite{NZ,N0} and Ganatra-Pardon-Shende \cite{GPS}). In this article, however, we stick with a more refined invariant of the Weinstein manifold: the skeleton up to Weinstein homotopy (note Weinstein homotopic implies symplectomorphic). In particular, any invariant of the Weinstein manifold defined in terms of the skeleton will remain unchanged (if it is in fact an invariant). Conversely, as this program develops further, we will have a finite set of moves connecting Weinstein homotopy arboreal skeleta and one could verify that a quantity \emph{is} an invariant by checking it is unchanged under these moves.
	
	In \cite{N2}, Nadler proved that any Legendrian singularity admits a non-characteristic deformation to a collection of arboreal Legendrian singularities preserving the associated microlocal sheaf category. Some have envisioned a version of this procedure for singular Lagrangians in a symplectic manifold. In this paper, while we maintain consistency with this general idea, we do not attempt to emulate this Legendrian arborealization in the Lagrangian setting, and it is possible that our procedure would yield different results than a rigorous global Lagrangian version of \cite{N2}. While there is not a unique arboreal representative of a given Weinstein class and thus could be more than one procedure for arborealization, our procedure has the advantage that the deformations of the Lagrangian skeleton we perform preserve the Weinstein homotopy type of the surrounding symplectic manifold instead of only (a priori) preserving its microlocal sheaf/wrapped Fukaya category invariants. Furthermore, our procedure is global and any new singularities appearing through this deformation at the global scale are carefully controlled.
	
	The main progress made in this paper comes in two stages given in sections \ref{s:bones} and \ref{s:tangential}. The goal of the first stage is to find a skeleton whose abstract structure uniquely determines the Weinstein manifold and is stable under certain kinds of perturbations (contact perturbations of the holonomy along contact type hypersurfaces). We define the key structures: bones and joints. Bones are smooth Lagrangian manifolds (possibly with boundary and non-compact ends) whose union is the entire skeleton. Each joint lies on a unique bone in the closure of other bones which approach it, and is given as a ``front projection'' of a Legendrian co-normal lift of the joint in a unit co-tangent bundle. Singularities in the skeleton are determined by the embeddings of the singular joints in the bones. 
	\begin{theorem}\label{thm:uniquenbhd}
		Every Weinstein manifold can be Weinstein homotoped so that its skeleton is built of bones meeting along joints. The diffeomorphism type of the (singular) joints embedded in the bones uniquely recovers the Weinstein manifold up to Weinstein homotopy.
	\end{theorem}
	
	There are two distinct causes of singularities in the joints: inductive accumulation of joints onto joints (think of handles of many different indices interacting), and failures of the joints to be immersed. In the first stage, we deal with the first issue.
	
	\begin{theorem}\label{thm:notanginf}[Technical statement is Theorem \ref{thm:notang}]
		If a skeleton of a Weinstein manifold is built of bones with immersed joints, then after a generic perturbation, the skeleton has only (signed, Lagrangian) arboreal singularities.
	\end{theorem}
	
	In the second stage, we turn towards singularities of tangency where a the front projection associated to a joint fails to be an immersion. We provide a general procedure (Proposition \ref{p:localize}) for localizing these singularities of tangency so that we can modify the skeleton in a neighborhood to try to replace the singularities of tangency with a collection of arboreal singularities without destroying the singularity structure of distant parts of the skeleton. Finally, we deal with that the simplest type of singularities of tangency ($\Sigma^{1,0}$ singularities using the notation of the Thom-Boardman stratification).
	
	\begin{theorem}\label{thm:sigma10}
		Suppose all of the tangential singularities of joints have type $\Sigma^{1,0}$. Then there is a homotopy of the Weinstein structure to one whose skeleton has only arboreal singularities.
	\end{theorem}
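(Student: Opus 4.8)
The plan is to remove the $\Sigma^{1,0}$ tangential singularities bone by bone, in order of decreasing $\phi$-value, reusing the inductive bookkeeping from the proof of Theorem~\ref{thm:notang}. Fix a bone $\Delta$ of index $k$ with marrow $Z$ and assume every bone $\Delta_j$ with a joint $\cH_j$ on $\Delta$ has strictly larger $\phi$-value, so that inductively the singular Legendrians $\cK_j\subset S^*Z$ whose front projections produce the joints are already arboreal; by hypothesis the only remaining defect is a finite collection of $\Sigma^{1,0}$ tangencies of these front projections. I would first apply Proposition~\ref{p:localize} to confine the Liouville flow to a neighborhood of the joints on $\Delta$, then apply Proposition~\ref{p:thicken} to re-thicken the lower-dimensional stable manifolds that appear, producing the Reeb ribbons as at the end of Section~\ref{s:localize}. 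After localization any Weinstein homotopy supported near a $\Sigma^{1,0}$ locus leaves distant parts of the skeleton untouched; the single global constraint to respect is that the homotopy be $C^1$-small, so that it does not create new tangential singularities on the Reeb ribbons, whose front projections are embeddings with a definite margin.

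Next I would reduce to a local model. By the Thom--Boardman normal form of Section~\ref{s:front} together with the product structure $\cH_j=\cJ_j\times\mathbb{R}^k$ of joints, a neighborhood of a point of the $\Sigma^{1,0}$ locus is, after a change of coordinates, the product of the $(n-k-2)$ dimensional $\Sigma^1$ directions, the $k$ dimensional stable directions, and a two dimensional model in $T^*\mathbb{R}^2$ in which the joint is the semicubical cusp $\cJ=\{(t^2,t^3)\}$ and the relevant part of the skeleton is the zero section together with the conormal cone over the smooth Legendrian lift $\cK\subset S^*\mathbb{R}^2$. This configuration is not arboreal, because $\cJ$ is not a smooth hypersurface and the conormal cone is singular where it meets the zero section. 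It therefore suffices to produce in this two dimensional model a Weinstein homotopy, supported near the cone point and $C^1$-small, replacing it by a configuration whose joints form a smoothed signed arboreal hypersurface, and then to take products over the $\Sigma^1$ locus.

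The heart of the argument is this two dimensional surgery, which carries out at the level of the Legendrian the real blow-up of \cite{N2}. I would break $\cK$ along the preimage of the cusp edge into two pieces $\cK^{\pm}$ and modify each near the old cusp so that its front projection is an immersion and the two modified pieces meet along an arboreal ($A_2$-type) edge; correspondingly the cuspidal bone is broken into two Lagrangian bones, namely the conormal cones over $\cK^{\pm}$, possibly together with an auxiliary fin, whose joints on $\Delta$ are now smooth hypersurfaces meeting in the tangency patterns of Figures~\ref{fig:sgnA2} and~\ref{fig:sgnA3}. This is realized by a Weinstein homotopy which introduces a new Morse--Bott$^*$ family of zeros along the cusp locus through a birth of a cancelling pair -- built, as in the proofs of Proposition~\ref{p:localize} and Theorem~\ref{thm:model}, from a birth family of functions, Lemma~\ref{l:thicken}, and Proposition~\ref{p:isotropichtpy} -- so that the new stable manifolds supply the new bones; the signs on the new edges are read off from which side of the co-oriented separating hypersurfaces in $\Delta$ the new strata lie. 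Because the whole construction is performed on $\cK$ rather than on its front, its $C^1$ size can be taken as small as desired, which is what protects the Reeb ribbons.

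Finally I would check, using the characterization of (generalized, signed) arboreal hypersurfaces in Proposition~\ref{p:arbhyp}, that the resulting joint configurations on $\Delta$, on the new bones, and on the Reeb ribbons are arboreal and carry no tangential singularities, and that the local surgeries patch consistently over the whole $\Sigma^{1,0}$ locus -- a closed submanifold, possibly with boundary on other joints, since by hypothesis no deeper Thom--Boardman stratum occurs, and which we may take to meet the singular locus of $\cK$ generically. With all tangential singularities eliminated, Theorem~\ref{thm:notang} applies and, after a generic holonomy perturbation, the skeleton has only arboreal singularities. The main obstacle is the two dimensional surgery of the third paragraph: realizing the ``Legendrian real blow-up'' as an honest $C^1$-small Weinstein homotopy with the correct co-orientations so that the output is a \emph{signed} arboreal hypersurface, and verifying that this family of surgeries glues over the possibly topologically nontrivial and bounded $\Sigma^{1,0}$ locus without spawning new non-arboreal singularities where it abuts the rest of the skeleton.
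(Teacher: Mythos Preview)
Your proposal is correct and follows essentially the same route as the paper: induct on $\phi$-value, localize via Proposition~\ref{p:localize} and thicken to produce Reeb ribbons, pass to the $\Sigma^{1,0}$ normal form, perform a Legendrian-level real blow-up by introducing new Morse--Bott$^*$ zeros along the cusp locus $\Sigma$ and thickening, keep everything $C^1$-small so as not to disturb the Reeb-ribbon front, then finish with Theorem~\ref{thm:notang}.

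One place where the paper is sharper than your sketch, and which addresses exactly the ``main obstacle'' you flag at the end: rather than phrasing the surgery as ``break $\cK$ into $\cK^\pm$ meeting along an $A_2$ edge, with perhaps an auxiliary fin,'' the paper makes the fin the central object and specifies it explicitly. It first uses Proposition~\ref{p:isotropichtpy} (a homotopy of Lyapunov pairs on $\cK_j$, not a birth--death) to make $\Sigma$ a stable manifold of a new zero set $\Lambda\subset\cK_j$, and then applies Proposition~\ref{p:thicken} to thicken $\Lambda$ in a direction \emph{transverse} to $\cK_j$: in the symplectic normal $(u,v)$-plane to $\Sigma$ the thickened marrow lies on the line $v=\varepsilon u$, and the unstable manifolds at its two ends are prescribed as graphs $v=h_\pm(u)$ that coincide with $v=u^2$ outside $|u|\leq 2\varepsilon$ but have $h_\pm'\neq 0$ everywhere. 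The nonzero slope of the fin and of $h_\pm$ is precisely what kills the tangency with $\cF$, and the explicit formulas make the $C^1$-smallness (hence the protection of the $\cG$-front onto the Reeb ribbon) an immediate estimate in $\varepsilon$ rather than a soft argument. Your picture and the paper's are equivalent, but this concrete choice of thickening direction and of $h_\pm$ is the content that turns your plan into a proof.
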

	
	A generic front projection in high dimension can have more complicated singularities than $\Sigma^{1,0}$ type. However, even dealing only with $\Sigma^{1,0}$ tangential singularities, we can cover a wide range of examples. In particular, this covers all $4$-dimensional ($n=2$) Weinstein manifolds. In higher dimensions, deeper singularities can occur generically in front projections, though sometimes global Legendrian isotopies can eliminate them -- see recent work of \'{A}lvarez-Gavela \cite{AG} which provides an h-principle that under a homotopical condition shows that a smooth Legendrian can be Legendrian isotoped so that its front projection has only $\Sigma^{1,0}$ singularities.

	The structure of this paper is as follows. Section \ref{s:background} gives the technical set-up and definitions, generalizations we require of definitions and lemmas from the literature, and the key Weinstein homotopy lemma. The reader may prefer to skip this section and refer back when necessary upon a first read through. The arborealization begins in section \ref{s:bones}, where we explain thickening and define bones and joints which provide the right language to discuss stable Lagrangian strata of the skeleton and their generic singularities. Then we prove in section \ref{s:genclass} that a skeleton whose joints have no singularities of tangency has only arboreal singularities (up to perturbation). In section \ref{s:tangential} we address singularities of tangency. In section \ref{s:localize}, we prove we can localize the Liouville flow to a neighborhood of the tangential singularities in a controlled manner so that modifications of the skeleton to eliminate singularities of tangency do not reverberate causing new non-arboreal singularities in parts of the skeleton which were already made arboreal. The second step is to modify this locally confined skeleton to eliminate the singularities of tangency. In section \ref{s:arborealize}, we provide this modification for the case of $\Sigma^{1,0}$ singularities of tangency yielding Theorem \ref{thm:sigma10}.

	\section*{Acknowledgments}
		This work has greatly benefited from many discussions with David Nadler and Yasha Eliashberg. I am grateful for David's invaluable intuition on arborealization which confirmed throughout when things were on track and corrected them when they were not. I have learned an enormous amount from Yasha and every discussion we have had has taught me a new way of thinking about Lagrangians, symplectic manifolds, and singularities. I have tried to incorporate some of these perspectives into my definitions and proofs, which I believe has significantly advanced the clarity and scope of these results. I am also grateful for advice, interest, shared knowledge, and suggestions from Daniel \'{A}lvarez-Gavela, Roger Casals, Kai Cieleibak, Josh Sabloff, Vivek Shende, and Alex Zorn. During the course of this work, I have been supported by an NSF Postdoctoral Fellowship Grant No. 1501728.
	
	\section{Technical set-up} \label{s:background} This section collects and adapts for the purposes of this article, definitions and lemmas on Weinstein manifolds, Weinstein homotopies, and arboreal singularities. The eager reader can skip to the main conceptual content starting section \ref{s:bones} and refer back for technical results and definitions in this section as needed. First we review basic definitions for Weinstein manifolds in section \ref{s:wein}. An essential generalization of the usual Morse Weinstein structures will allow Morse-Bott families with boundary, and we discuss this in detail in subsection \ref{s:MorseBott}. We discuss front projections as Lagrangian/Legendrian foliations in section \ref{s:front} and then review arboreal singularities and recast them in the Lagrangian setting in section \ref{s:arb}. The key to all of the Weinstein homotopies we will create is proved in section \ref{s:Whtpy}.
	
		\subsection{Liouville and Weinstein structures}\label{s:wein}
		We review here basic definitions of Weinstein manifolds. A more in depth discussion of Weinstein manifolds and their context in symplectic geometry can be found in \cite{EG}, \cite{CE}, and \cite{E}.
		
		A Liouville manifold is an exact symplectic manifold $(W,\omega)$, with a Liouville vector field $V$ which is $\omega$-dual to a primitive for $\omega$ ($\iota_V\omega=\eta$ and $d\eta=\omega$), such that $V$ is complete, and there is an exhaustion by compact domains $W=\cup_k W^k$ such that $V$ is outwardly transverse to the boundary of each $W^k$. 
		
		Given a Liouville manifold, let $\phi^t:W\to W$ denote the flow along $V$ for time $t$. Then, one can define its skeleton \cite{CE}: 
		$$Skel(W,\omega,V)=\cup_{k=1}^\infty \cap_{t>0} \phi^{-t}(W^k)$$
		which is independent of the exhaustion $\{W^k\}$.
		
		We will work with \emph{finite type} Liouville manifolds: those whose skeleta are compact.
		
		A Weinstein manifold is a Liouville manifold $(W,\omega, V)$ together with a generalized Morse function $\phi$ such that $V$ is gradient-like for $\phi$ (equivalently we say $\phi$ is a Lyapunov function for $V$). The strong version of this condition is defined to mean
		$$d\phi(V)\geq \delta(|V|^2+|d\phi|^2)$$
		for some $\delta: W\to \R_{>0}$, using some Riemannian metric on $W$ to define the norm. The weak version of the gradient-like/Lyapunov condition is that the zeros of $V$ coincide with the critical points of $\phi$, and off of this set $d\phi(V)>0$. The existence of a Lyapunov function for a given vector field is equivalent to the existence of a weak Lyapunov function in the complement of a neighborhood of the zero set of the vector field. Near the zeros, the existence of a Lyapunov function puts constraints on the behavior of the flow.
		
		At a zero $p$ of a vector field $V$, the linearization $D_pV$ splits the tangent space into invariant subspaces spanned by generalized eigenvectors corresponding to eigenvalues with positive, negative, or zero real part:
		$$T_pW = E_p^+\oplus E_p^- \oplus E_p^0.$$
		There are unique locally smooth $V$ invariant manifolds whose tangent spaces at $p$ are given by $E_p^\pm$, called the stable and unstable manifolds.

		For a Weinstein manifold where the function $\phi$ is Morse, the skeleton is the union of the stable manifolds of the zeros of the Liouville vector field.  Each such stable manifold is isotropic. Generalizing this situation, we define an \emph{isotropically stratified skeleton} of a Liouville manifold, to be the skeleton of a Liouville manifold together with a stratification such that each stratum is isotropically embedded in $(W,\omega)$. In the case of a Morse Weinstein structure, the dimension of each stratum (stable manifold) is the index of the corresponding critical point.
		
		We will work with a mild generalization of Weinstein structures which does not require the zeros of $V$ (equivalently critical points of $\phi$) to be isolated. We will require the zeros to be a closed subset of $W$, and the Liouville condition will ensure that the submanifold families of zeros will be isotropic (see section \ref{s:MorseBott}).
		
		The notion of equivalence we will work with is \emph{Weinstein homotopy}, meaning a 1-parameter family of Weinstein structures, but again Weinstein structure will refer to our mild generalization. In order to ensure that the topology and dynamics of the Liouville vector field are not sent off the infinite end, we require the Weinstein homotopy to be a composition of simple Weinstein homotopies $(\omega_t,V_t,\phi_t)$ which are each compatible with some compact exhaustion $\{W^k\}_{k=1}^{\infty}$ in the sense that $V_t$ is transverse and outward pointing along $\partial W_k$ for all $k$ and all $t$ parameters for the simple Weinstein homotopy.

		\subsection{Morse-Bott with boundary} \label{s:MorseBott}
		In order to spread out the singularities of a skeleton which can collect at subcritical points, we will utilize Weinstein structures with Morse-Bott families of critical points, where we allow the families to have boundary but control the behavior of $V$ near this boundary. This submanifold $Z$ with boundary where $V$ vanishes is necessarily isotropic by the Liouville condition. 
		
		For any vector field $V$, at a point $r$ where $V_r=0$, the differential of $V$ $d_rV$ splits $T_rW$ as $E_r^+\oplus E_r^-\oplus E_r^0$ where $E_r^{+/-/0}$ is spanned by the generalized eigenvectors corresponding to eigenvalues with $+/-/0$ real part. A zero of the vector field is called \emph{hyperbolic} if $E_r^0=\{0\}$ (note this is a slight generalization of a non-degenerate zero which rules out purely imaginary eigenvalues).
		
		We will work with a slightly more general notion than the non-degeneracy or hyperbolic condition on the zeros of a Liouville vector field in a ``Weinstein manifold.'' At points $p\in Z$ (including in $\partial Z$), since $V$ vanishes along every point in $Z$, $T_pZ\subset E_p^0$. For our slightly generalized notion of Weinstein, we will require that $T_pZ = E_p^0$ for every $p\in Z$ (including $p\in \partial Z$), to replace the non-degenerate condition. We still have smooth stable and unstable manifolds tangent to $E_r^-$ and $E_r^+$ respectively.
		
		For $q\in \partial Z$, we additionally require that in the direction in $T_qZ$ which is outwardly normal to the boundary, while the analytic germ at the point $q$ cannot detect it, the vector field is outward pointing, meaning there is a unique non-constant flow-line $\gamma(t)$ of $V$ such that $\lim_{t\to -\infty}\gamma(t)=q$, and such that the closure of the image of $\gamma$ is tangent at $q$ to $T_qZ$ in an outward normal direction to the boundary (see figure \ref{fig:bdryzeros}). We will call submanifolds with boundary where $V=0$ which have this property \emph{boundary repellent}.
		
		\begin{figure}
			\centering
			\includegraphics[scale=.5]{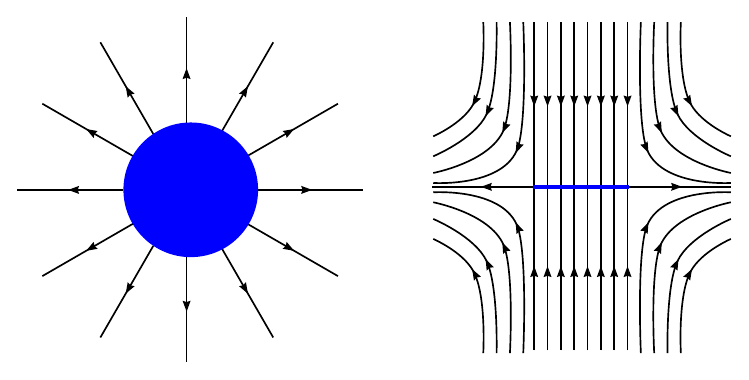}
			\caption{The behavior near submanifolds $Z$ with boundary where the Weinstein structure has zeros/critical points. Zeros are colored blue. Flow-lines are indicated in an isotropic slice containing $Z$. The flow is outward radial in undrawn transverse directions which pair symplectically with the coordinates in this plane.}
			\label{fig:bdryzeros}
		\end{figure}
		
		\begin{definition}
			A Weinstein structure $(V,\phi)$ on $(W,\omega)$ is Morse-Bott* if the zeros of $V$ come in connected components $Z_i$ which are submanifolds or submanifolds with boundary in $W$ such that for each $z\in Z$, $T_zZ=E_z^0$ and if $\partial Z_i\neq \emptyset$ then $Z_i$ is boundary repellent.
		\end{definition}
		
		We will always work in the class of Morse-Bott* Weinstein structures.
		
		Let $Z$ be a connected component of the subset of points where $V=0$. Let 
		$$\stab(Z)=\{w\in W\mid \lim_{t\to \infty}V^t(w)\in Z \}$$
		be the union of the stable manifolds of all the zeros in $Z$. 
		
		We verify that all stable manifolds in this Morse-Bott* setting are isotropic submanifolds (possibly with boundary).
		
		\begin{prop}\label{p:isotropic}
			Let $V$ be a Morse-Bott* Liouville vector field and $Z$ a connected component of zeros of $V$. Then the stable manifold $\stab(Z)$ is an isotropically embedded submanifold (possibly with boundary) locally near $Z$.
		\end{prop}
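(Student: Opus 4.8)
The plan is to prove this in two stages: first show that $\stab(Z)$ is a smooth embedded submanifold (with boundary if $\partial Z\neq\emptyset$) in a neighborhood of $Z$, and then show that $\omega$ restricts to zero on it.

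For the submanifold statement I would invoke the stable manifold theorem in its Morse--Bott form. At an interior point $z\in Z$ the defining hypothesis $T_zZ=E_z^0$ says that the only neutral directions are those tangent to $Z$, so the stable set of $z$ is a smooth embedded disk $W^s(z)$ of dimension $\dim E_z^-$ depending smoothly on $z$, and the union of these over the interior of $Z$ is a smooth submanifold of dimension $\dim Z+\dim E_z^-$. Near a boundary point $q\in\partial Z$ one uses the boundary repellent condition: the ``extra'' neutral direction of $\partial Z$ (its outward normal inside $Z$) carries a flow that pushes points off of $\partial Z$, so it does not enter the stable set, and $\stab(Z)$ is obtained by fibering the stable disks over the half-neighborhood of $q$ in $Z$. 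This exhibits $\stab(Z)$, near $Z$, as a manifold with boundary whose boundary is $\stab(\partial Z)$.

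For isotropy I would use the Liouville rescaling. Writing $\lambda=\iota_V\omega$, Cartan's formula gives $\mathcal{L}_V\lambda=\iota_Vd\lambda+d(\iota_V\lambda)=\iota_V\omega+0=\lambda$, so $(\phi^t)^*\lambda=e^t\lambda$ for the Liouville flow $\phi^t$. Since $\stab(Z)$ is $\phi^t$-invariant, $d\phi^t$ maps $T_w\stab(Z)$ to $T_{\phi^t(w)}\stab(Z)$; hence for $w\in\stab(Z)$ near $Z$ and $X\in T_w\stab(Z)$,
$$\lambda_w(X)=e^{-t}\,\lambda_{\phi^t(w)}(d\phi^t X)\qquad\text{for all }t\geq0.$$
As $t\to\infty$ we have $\phi^t(w)\to Z$, and since $V$ (hence $\lambda$) vanishes on $Z$, $|\lambda_{\phi^t(w)}|$ is bounded by a constant times the distance from $\phi^t(w)$ to $Z$ and so tends to $0$; meanwhile $|d\phi^t X|$ grows at most subexponentially in $t$, because on $T\stab(Z)$ the directions transverse to $Z$ lie in $E^-$ (so they contract) while the directions along $Z$ are neutral. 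Thus the right-hand side tends to $0$, forcing $\lambda_w(X)=0$. Therefore the pullback of $\lambda$ to $\stab(Z)$ vanishes near $Z$, and hence so does the pullback of $\omega=d\lambda$, which is precisely the statement that $\stab(Z)$ is isotropic near $Z$. The same computation is valid at boundary points of $Z$, so the boundary repellent hypothesis is only needed in the first stage.

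The main obstacle is the careful verification in the first stage near $\partial Z$: one must check that the boundary repellent condition genuinely forces $\stab(Z)$ to be a manifold with boundary, rather than developing corners or a non-smooth frontier, and correctly identify its boundary with $\stab(\partial Z)$. Over the interior of $Z$ this is the standard invariant manifold machinery, and once the invariant splitting over $Z$ is in place the subexponential growth estimate used in the isotropy step is routine.
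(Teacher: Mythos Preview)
Your argument is correct and takes a genuinely different route from the paper's. The paper works in explicit Weinstein-neighborhood coordinates $T^*Z\times\C^k$: it first cites \cite[Prop.~11.9]{CE} to know that each individual fiber $W^s(z)$ is isotropic, and then uses the Liouville PDE on the components of $V$ (namely $\partial F_i/\partial p_i+\partial G_i/\partial q_i=1$ together with $V|_Z=0$) to force $\stab(Z)\subset\{p=0\}$, i.e.\ $T\stab(Z)\subset(TZ)^{\perp_\omega}$; combining these two facts gives isotropy. Your approach instead exploits the global conformal identity $(\phi^t)^*\lambda=e^t\lambda$ and the dynamics on the center--stable manifold to show directly that $\lambda$ (hence $\omega$) vanishes on $\stab(Z)$. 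The paper's method yields the extra coordinate information $\stab(Z)\subset\{p=0\}$, which is used later; your method is coordinate-free, gives the slightly stronger conclusion $\lambda|_{\stab(Z)}=0$, and bypasses the citation to \cite{CE}. The subexponential growth of $|d\phi^t X|$ on $T\stab(Z)$ that you need is indeed routine: since $d_zV|_{T_zZ}=0$ and $d_zV|_{E_z^-}$ has spectrum in $\{\operatorname{Re}<0\}$, the variational equation along an orbit converging to $z$ is asymptotically governed by a matrix with nonpositive spectrum on $T\stab(Z)$, and Gronwall gives the estimate. (You could equally well run the rescaling argument with $\omega$ in place of $\lambda$, which removes even the need to observe $\lambda|_Z=0$.) Your identification of the $\partial Z$ case as the genuine subtlety in the smooth-manifold stage is accurate; the paper handles the interior via a citation to Austin--Braam and is no more explicit than you are about the boundary.
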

		
		\begin{proof}
			By the Morse-Bott* condition, $E_z^0=T_zZ$. In particular, the eigenvalues of $d_zV$ with vanishing real part are identically zero. Therefore, for $z\in Z$, the normal bundle splits as $\nu_+\oplus \nu_-$, where the fiber of $\nu_{\pm}$ is identified with $E^{\pm}_z$.
			
			The stable manifold $\Delta:=\stab(Z)$ is the image of an immersion of $\nu^-$ by Proposition 3.2 of \cite{AB}. There is a map $\pi: \Delta\to Z$ defined by $\pi(w)=\lim_{t\to\infty}V^t(w)$ which when restricted to a neighborhood of $Z$ is a fiber bundle with isotropic fibers, because the local stable manifold of any (possibly degenerate) zero of a Liouville vector field is isotropic \cite[Proposition 11.9]{CE}. We will extend this to ensure that the union of all these isotropic fibers is also an isotropic submanifold using local coordinates.
			
			$Z$ itself is also isotropic. Therefore, there is a symplectomorphism identifying a neighborhood of $Z$ with $T^*Z \times SN(Z)$ where $SN(Z)$ denotes the symplectic normal bundle of $Z$. For notational simplicity, we assume $SN(Z)$ is a trivial bundle, which will suffice for all our applications. (In general since we are proving a local statement, we can use a local trivialization.) Thus we get local coordinates $(\vec{q},\vec{p},\vec{x},\vec{y})\in T^*Z\times \C^k$ on a neighborhood of $Z$, with symplectic form $d\vec{q}\wedge d\vec{p}+d\vec{x}\wedge d\vec{y}$. The Liouville vector field $V$ vanishes along $Z=\{\vec{p}=\vec{x}=\vec{y}=0\}$. If in a coordinate chart of this neighborhood, 
			$$V=\sum F_i(\vec{q},\vec{p},\vec{x},\vec{y})\partial_{p_i}+ G_i(\vec{q},\vec{p},\vec{x},\vec{y})\partial_{q_i}+H_j(\vec{q},\vec{p},\vec{x},\vec{y})\partial_{x_j}+J_j(\vec{q},\vec{p},\vec{x},\vec{y})\partial_{y_j}$$
			then the Liouville condition $d(\iota_V\omega)=\omega$ requires
			$$\frac{\partial F_i}{\partial p_i}+\frac{\partial G_i}{\partial q_i}=1 \qquad \text{ for all }i.$$
			Since $V$ vanishes identically along $Z$, $F_i(\vec{q},0,0,0)=0$ and $\frac{\partial G_i}{\partial q_i}(\vec{q},0,0,0)=0$.
			It follows that $\frac{\partial F_i}{\partial p_i}(\vec{q},0,0,0)=1$ for all $i$, so in a sufficiently small neighborhood of $Z$, $\frac{\partial{F_i}}{\partial p_i}>0$. Therefore, for $(\vec{q},\vec{p},\vec{x},\vec{y})$ in a sufficiently small neighborhood of $Z$, with $\vec{p}\neq 0$, if $\phi_t$ denotes the flow of $V$, then the $\vec{p}$ component of $\phi_t(\vec{q},\vec{p},\vec{x},\vec{y})$ has larger magnitude than $|\vec{p}|$, so such a point cannot be in the stable manifold of $Z$. Thus we conclude that near $Z$, $\Delta \subset \{(\vec{q},\vec{0},\vec{x},\vec{y})\}$. Therefore $T\Delta\subset (TZ)^{\perp_\omega}$. In a locally trivial chart of the bundle $\pi: \Delta \to Z$ the tangent space splits as $T_x\Delta=T_{\pi(x)}Z\oplus T_x\stab(\pi(x))$. Since the fibers are isotropic and symplectically orthogonal to the tangent space to $Z$, this implies $\Delta$ is isotropic. 
		\end{proof}

		\subsection{Front projections and tangential singularities} \label{s:front}
		
		The term ``front projection'' is often used in contact topology to refer specifically to the projection of the 1-jet space $\pi: J^1(N)\to N\times\R$. In particular, when $N=\R^n$, this gives the projection $\pi: (\R^{2n+1}_{(x_i,y_i,z)}, \ker(dz-\sum y_idx_i)) \to \R^{n+1}_{(x_i,z)}$. A Legendrian in $(\R^{2n+1}_{(x_i,y_i,z)}, \ker(dz-\sum y_idx_i))$ can be recovered from its front projection by the equations $y_i = \frac{\partial z}{\partial x_i}$. Observe that the fibers of this projection $\pi^{-1}(x_1,\cdots, x_n,z)$ are Legendrian (the tangent space is the span of the $\partial_{y_i}$).
		
		Another common ``front projection'' of a contact manifold is the projection of the unit co-tangent bundle $S^*M \to M$. Here, the fibers which are projected out are the co-tangent spheres, which again are Legendrian submanifolds foliating the total space. The Legendrian lift of a hypersurface representing a front projection in $M$ is the co-normal lift.
		
		It will be useful in this paper to use the term ``front projection'' to refer generally to the quotient of a contact manifold by a Legendrian foliation, or similarly the quotient of a symplectic manifold by a Lagrangian foliation (e.g. $T^*M\to M$ gives a fibration of $T^*M$ by the Lagrangian co-tangent fibers). We will essentially always be using the co-tangent front projections, but we will analyze these projections by analyzing the interactions of the leaves of the foliation with Lagrangian and Legendrian submanifolds of interest.
		
		A front projection of a Legendrian in the unit co-tangent bundle has tangential singularities, whenever the front projection fails to be an immersion. After a generic perturbation, one obtains an initial stratification of the Legendrian into submanifolds $\Sigma^k$ such that the front projection at a point $p\in \Sigma^k$ drops rank by $k$ where $\Sigma^k$ has codimension $k(k+1)/2$ in the Legendrian. The \emph{Thom-Boardman stratification} extends this more deeply, by inductively defining $\Sigma^{k_1,\cdots, k_{i-1},k_i}$ to be the set of points $p\in \Sigma^{k_1,\cdots, k_{i-1}}$ such that the restriction of the front projection to $\Sigma^{k_1,\cdots, k_{i-1}}$ drops rank by $k_i$ at $p$. For a generic Legendrian, each $\Sigma^{k_1,\cdots, k_i}$ is a smooth submanifold of the Legendrian of a predicted co-dimension. In particular once we fix the dimension of the Legendrian, there are a finite number of types $(k_1,\cdots, k_i,0)$ such that generically only $\Sigma^{k_1,\cdots, k_i,0}$ singularities occur (others have too large codimension).
		
		The simplest type of tangential singularities are the $\Sigma^{1,0}$ singularities. In a generic front projection of a smooth Legendrian $\cK$, the locus of $\Sigma^1$ singularities has co-dimension $1$ in $\cK$. Suppose $\pi: (Y,\xi)\to Z$ is a Legendrian front projection and $\cK\subset (Y,\xi)$ a Legendrian. Then near a point in the $\Sigma^{1,0}$ locus, there are coordinates $(x_1,\cdots, x_{n-2},x_{n-1})$ on $\cK$ and $(y_1,\cdots, y_n)$ on $Z$ such that $\pi(x_1,\cdots, x_{n-2},x_{n-1})= (x_1,\cdots, x_{n-2},x_{n-1}^2,x_{n-1}^3)$. The image looks like the product with $\R^{n-2}$ of the semi-cubical cusp.
		
		\subsection{Arboreal singularities}	\label{s:arb}
		
		To each tree (acyclic connected graph), Nadler \cite{N1} associates a topological stratified complex. If the tree has $N$ vertices, then the highest dimension of the strata is $N-1$ and the complex can be built from $N$ top dimensional strata with boundary and corners, glued together in a manner determined by the edges of the tree. Legendrian and Lagrangian models of this singularity are associated to the tree together with a choice of root vertex. The root induces a partial ordering on the vertices of the tree, and associates each vertex to a level given by its distance to the root. The Lagrangian model associated to the rooted tree $\mathcal{T}$ with $N$ vertices is the union of the zero section in $T^*\R^{N-1}$ with the positive co-normal to an \emph{arboreal hypersurface} in $\R^{N-1}$ associated to the rooted forest (disjoint union of trees) obtained by deleting the root of $\mathcal{T}$. An arboreal hypersurface for a rooted forest $\mathcal{F}$ with vertices $\{v_{\alpha}\}_{\alpha=1}^{N-1}$ is $C^0$ close to the following stratified subset of $\R^{N-1}$
		$$H_{\mathcal{F}}=\cup_{\alpha=1}^{N-1} P_{\alpha} \text{ where } P_{\alpha}=\{(x_1,\cdots,x_{N-1})\in \R^{N-1}\mid x_\alpha=0,\; x_\beta\geq 0 \; \forall v_{\beta}<v_{\alpha} \}.$$
		Each stratum $P_{\alpha}$ comes with a co-orientation by $\partial_{x_\alpha}$. The arboreal hypersurface that we actually use to take the co-normal of is the ``smoothed arboreal hypersurface'' which modifies $P_{\alpha}$ near its boundary so that the tangent spaces and co-orientations of $P_\alpha$ and $P_\beta$ agree along their intersection with $P_\alpha$ whenever $v_\beta<v_\alpha$. For the complete details of arboreal hypersurfaces see Section 3 of \cite{N1} or section 4.3 of \cite{N2}.
		
		For our global Weinstein set-up, we will require a signed version of Nadler's arboreal singularities/hypersurfaces. Therefore, we give a variation here on Nadler's arboreal hypersurface construction \cite[\S 3.2]{N2}. Let $\mathcal{T}$ be a rooted tree, such that all edges $e_{\hat\alpha,\alpha}$ which are not adjacent to the root are decorated by a sign $\sigma_{\hat\alpha,\alpha}\in \{\pm 1\}$ (where $e_{\hat\alpha,\alpha}$ denotes an edge from $v_{\hat\alpha}$ to $v_{\alpha}$). As above, let $\cF$ be the rooted forest induced by deleting the root of $\mathcal{T}$. Then the \emph{signed arboreal hypersurface} associated to the signed rooted forest $\cF$ is a $C^0$ close smoothing of the union of the strata
		$$P_{\alpha}=\{(x_1,\cdots,x_{N-1})\in \R^{N-1}\mid x_\alpha=0,\; \sigma_{\beta,\beta'} x_\beta\geq 0 \; \forall v_{\beta}<v_{\alpha} \text{ where } e_{\beta,\beta'} \text{ points towards } v_\alpha \}.$$
		The smoothing is performed by first choosing a function $f:\R^2\to \R$ which is a submersion and takes on positive, zero, and negative values as shown in figure \ref{fig:smoothing}.
		
		\begin{figure}
			\centering
			\includegraphics[scale=.5]{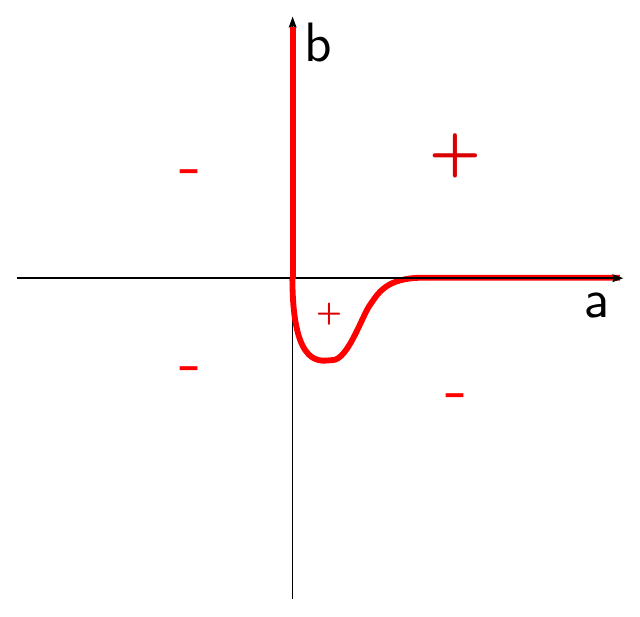}
			\caption{The thick red curve indicates the region where $f(a,b)=0$.}
			\label{fig:smoothing}
		\end{figure}
		
		Given a vertex $v_\alpha$, let $v_{\alpha_0}$ be the root vertex in the forest connected to $v_\alpha$ and let $v_{\alpha_0},v_{\alpha_1},\cdots, v_{\alpha_{k-1}},v_{\alpha_k}=v_\alpha$ be a directed chain from $v_{\alpha_0}$ to $v_{\alpha}$. Define 
		$$g_k^\alpha = \sigma_{\alpha_{k-1},\alpha_k}f(\sigma_{\alpha_{k-1},\alpha_k}x_{\alpha_{k-1}},\sigma_{\alpha_{k-1},\alpha_k}x_{\alpha_k}),$$ 
		and via downward induction define 
		$$g_j^\alpha = \sigma_{\alpha_{j-1},\alpha_j}f(\sigma_{\alpha_{j-1},\alpha_j} x_{j-1},\sigma_{\alpha_{j-1},\alpha_j}g_{j+1}).$$
		Set $H_\alpha = \{g_0^\alpha=0\}$ co-oriented by $\nabla g_0^\alpha$. Then the smoothed arboreal hypersurface is the union of the smoothed strata 
		$$\bigcup_{v_\alpha\in \cF} H_\alpha.$$

		The arboreal hypersurfaces associated to signed rooted graphs with two or three vertices are shown in figure \ref{fig:sgnA2} and \ref{fig:sgnA3}.
		

		\begin{figure}
			\centering
			\includegraphics[scale=.8]{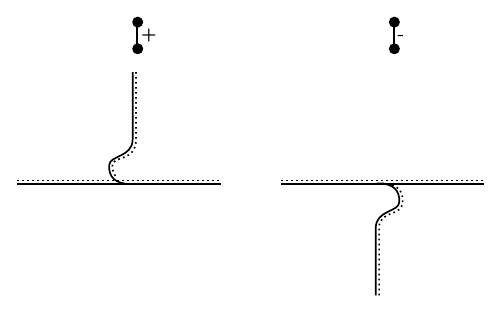}
			\caption{Arboreal hypersurfaces associated with signed rooted graphs with two vertices.}
			\label{fig:sgnA2}
		\end{figure}
		
		\begin{figure}
			\centering
			\includegraphics[scale=.8]{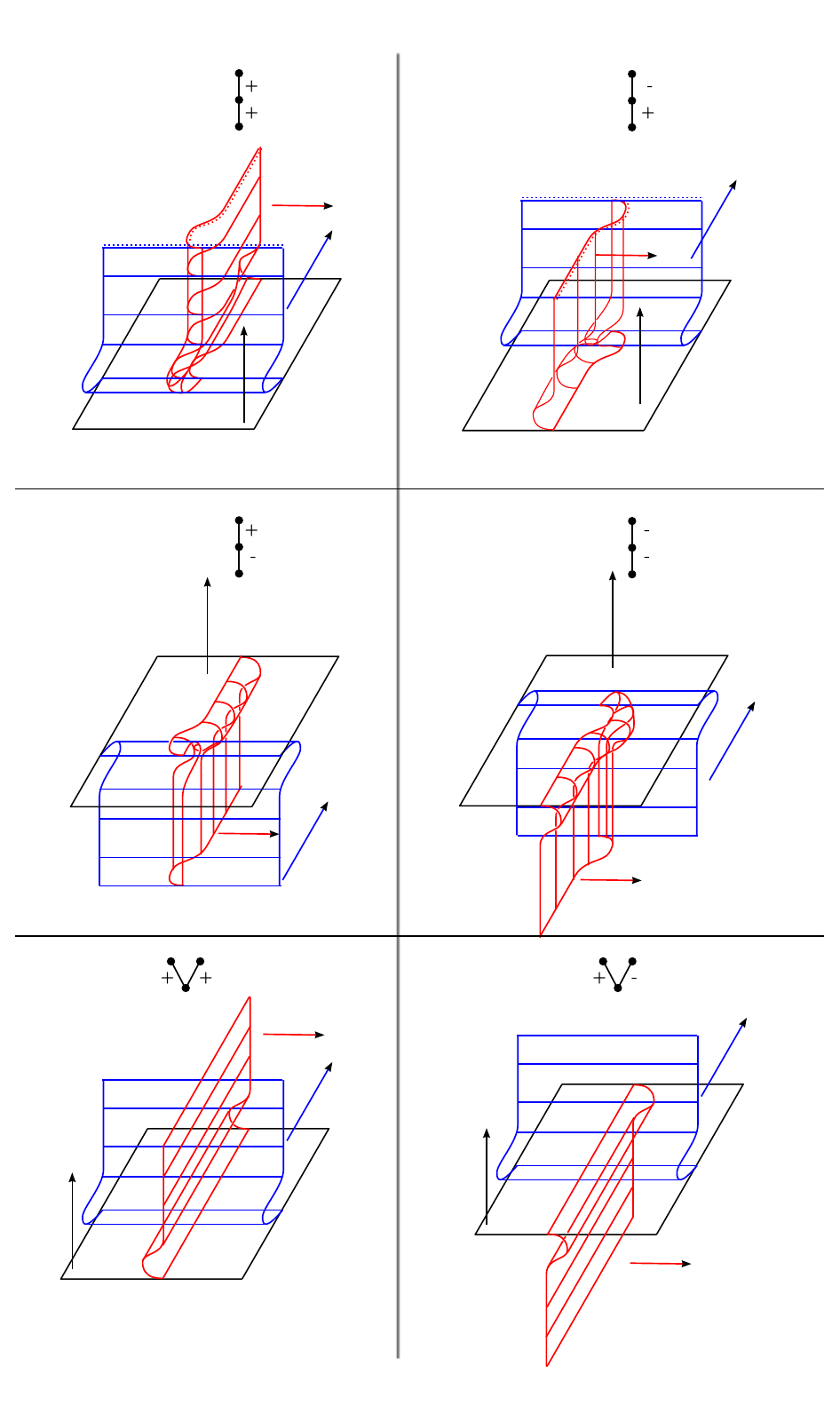}
			\caption{Arboreal hypersurfaces associated with signed rooted graphs with three vertices.}
			\label{fig:sgnA3}
		\end{figure}		
		
		A slight generalization of arboreal singularities allows for interactions between strata involving some strata with boundary. These are defined as \emph{generalized arboreal singularities} in \cite{N2}. A \emph{leafy rooted forest} is a rooted forest $\mathcal{F}$ together with a collection $\ell$ of marked leaves--vertices which are maximal with respect to the partial order induced by the roots. To a leafy forest $(\mathcal{F},\ell)$, define another rooted forest $\mathcal{F}^+$ which adds another vertex above each leaf in $\ell$. Then the generalized arboreal hypersurface associated to $(\mathcal{F},\ell)$ is a hypersurface associated to $\cF^+$ in $\R^{|\mathcal{F}^+|}$ given by a similarly defined smoothing of
		$$H_{(\mathcal{F},\ell)}=\bigcup_{v_{\alpha}\in \mathcal{F}^+\setminus \ell} P_{\alpha} $$
		The first Lagrangian generalized arboreal singularity is the zero section plus the co-normal to a hypersurface with boundary as shown in figure \ref{fig:genarb}.
		
		\begin{figure}
			\centering
			\includegraphics[scale=.4]{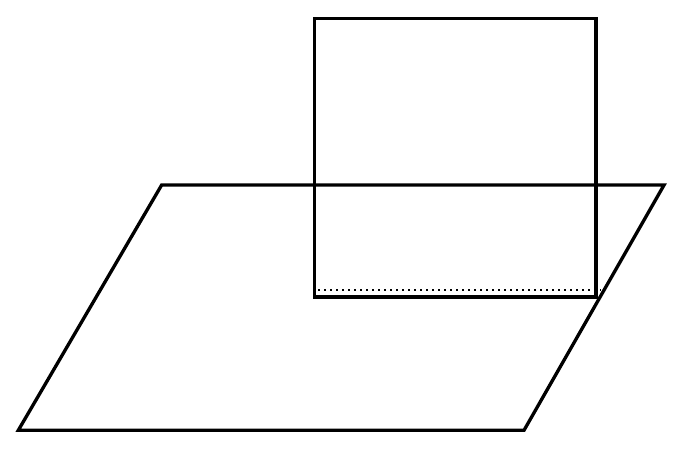}
			\caption{Lagrangian $2$-dimensional generalized arboreal singularity.}
			\label{fig:genarb}
		\end{figure}
		
		Arboreal hypersurfaces are defined up to an ambient isotopy of $\R^n$, and any such ambient isotopy extends to a symplectic isotopy of $T^*\R^n$ taking co-normals to co-normals.
		
		\begin{remark} Because a Lagrangian arboreal singularity is defined as the union of the zero section of $T^*\R^n$ together with the positive co-normals to an arboreal hypersurface (which is well defined up to ambient isotopy of $\R^n$), a symplectic neighborhood of a given Lagrangian arboreal singularity is determined up to symplectomorphism by the signed rooted tree.\end{remark}
		
		We formulate here a characterization to check whether a given singular hypersurface in $\R^n$ is an arboreal hypersurface, and similarly for a generalized arboreal hypersurface.
		
		\begin{prop} \label{p:arbhyp}
			A singular hypersurface $\mathcal{H}$ in $\R^n$ is a \emph{smoothed arboreal hypersurface} if and only if it can be written as the union of closed hypersurfaces with boundary and corners (of any codimension) $H_1,\cdots, H_j$, such that each point in $\mathcal{H}$ is in the interior of a unique $H_i$ and the following properties hold.
			\begin{enumerate}
				\item \label{c:an} If $p$ is a point in a codimension $k_i$ corner of $H_i$, then there exists a unique set $H_{j_0^i},\cdots , H_{j_{k_i-1}^i}$ such that $p$ is in a codimension $r$ corner of $H_{j_r^i}$ for $r=0,\cdots, k_i-1$. Moreover, at $p$ $H_{j_0^i},\cdots, H_{j_{k_i-1}^i}$ are tangent of order $k_i$ to $H_i$.
				\item \label{c:int} If $C_i$ is a codimension $k_i$ corner of $H_{j_i}$ for $i=1,\cdots, \ell$ and the sets of $H_{j_r^i}$ from condition \ref{c:an} are disjoint for distinct $C_i$, then for each $2\leq m\leq \ell$, $C_1\cap \cdots \cap C_{m-1}$ intersects $C_m$ transversally in an $n-k_1-\cdots - k_m-m$ submanifold of $\R^n$. In particular the intersection is empty if $n<k_1+\cdots +k_m+m$.
			\end{enumerate}
			
			A singular hypersurface $\mathcal{H}$ in $\R^n$ is a generalized arboreal hypersurface if it meets condition \ref{c:int}, but we loosen condition \ref{c:an} to the following:
			
			(1') If $p$ is a point in a codimension $k_i$ corner of $H_i$, then there exists a unique set $H_{j_0^i},\cdots , H_{j_{k_i-1}^i}$ such that $p$ is in a codimension $r$ corner of $H_{j_r^i}$ for $r=0,\cdots, k_i-2$. Moreover, at $p$ $H_{j_0^i},\cdots, H_{j_{k_i-2}^i}$ are tangent of order $k_i$ to $H_i$.
		\end{prop}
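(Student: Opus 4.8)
The plan is to pass between the combinatorics of the (signed, possibly leafy) rooted forest and the geometry of $\mathcal{H}$: the ``only if'' statement will follow by inspecting the explicit model, while the ``if'' statement requires first reconstructing the tree from conditions \ref{c:an}--\ref{c:int} and then straightening $\mathcal{H}$ onto the model. Each quantity appearing in \ref{c:an} and \ref{c:int}---the codimension of a corner, the order of tangency of one piece to another along a corner, and the dimension of an iterated intersection of corners---is invariant under ambient diffeomorphisms of $\R^n$. Since an arboreal hypersurface is by definition ambient-isotopic to some valid smoothing $\bigcup_\alpha H_\alpha$ of $H_{\mathcal{F}}$, for the forward implication it suffices to verify \ref{c:an} and \ref{c:int} for such a model, which is a direct computation with the explicit functions $g_0^\alpha$. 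For the model the codimension-$r$ corners of $H_\alpha=\{g_0^\alpha=0\}$ are, after the smoothing, the loci where exactly $r$ of the ancestor coordinates $x_{\alpha_0},\dots,x_{\alpha_{k-1}}$ of $v_\alpha$ vanish; the deepest corner (codimension $k$ equal to the level of $v_\alpha$) is $\{x_\alpha=x_{\alpha_0}=\cdots=x_{\alpha_{k-1}}=0\}$, and the chain it produces in \ref{c:an} is the ancestral chain $H_{\alpha_0},\dots,H_{\alpha_{k-1}}$. The iterated definition of $g_0^\alpha$ out of the model submersion $f$ is built precisely so that each ancestor meets $H_\alpha$ to order $k$ along this corner, giving \ref{c:an}; and \ref{c:int} holds because two corners with disjoint associated chains come from distinct components of $\mathcal{F}$, hence involve complementary sets of coordinates and intersect transversally. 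Discarding the pieces indexed by marked leaves and working over $\mathcal{F}^+$, the same inspection verifies \ref{c:int} and (1') for the generalized model.

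For the reverse implication, suppose $\mathcal{H}=\bigcup_i H_i$ satisfies \ref{c:an} and \ref{c:int}. Let $k_i$ be the largest codimension of a corner of $H_i$, pick $p$ in such a deepest corner, and let $H_{j_0^i},\dots,H_{j_{k_i-1}^i}$ be the chain furnished by \ref{c:an}. The relation ``tangent of order $k$ to a common hypersurface at $p$'' is transitive, so each $H_{j_r^i}$ is tangent of order $k_i\ge r$ to $H_{j_{r'}^i}$ for $r'>r$; feeding the initial segment $H_{j_0^i},\dots,H_{j_{r-1}^i}$ back into \ref{c:an} for $H_{j_r^i}$ at $p$ and invoking the uniqueness clause shows the chains are nested, so the prescription ``$v_{j_r^i}$ is the level-$r$ ancestor of $v_i$'' is consistent. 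Reading off the co-orientations carried by the pieces along the resulting edges as the signs $\sigma_{\hat\alpha,\alpha}$, and adjoining one root vertex joined to every $H_i$ with $k_i=0$, we obtain a signed rooted tree $\mathcal{T}$ whose root-deleted forest $\mathcal{F}$ has $k_i$ as its distance-to-root level function. Running the identical argument with (1') in place of \ref{c:an} produces a signed rooted forest together with the pieces whose deepest chain has a ``free'' last link; identifying the latter with the vertices of $\mathcal{F}^+$ above marked leaves recovers the leafy forest $(\mathcal{F},\ell)$.

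It remains to show $\mathcal{H}$ is ambient-isotopic to the model $H_{\mathcal{F}}$ for the reconstructed data, and here I would induct on the strata of $\mathcal{H}$ ordered by decreasing codimension. Condition \ref{c:an} (resp.\ (1')) fixes the local model of $\mathcal{H}$ near a corner stratum: along a corner with chain $H_{j_0},\dots,H_{j_{k-1}}$ the tangency-of-order-$k$ requirement forces a neighborhood of an interior point of that corner to be, after a diffeomorphism, the standard smoothed model in the corresponding coordinates, while \ref{c:int} guarantees that corner strata coming from disjoint chains sit in general position. Thus the local normalizations can be made on a locally finite cover and---since an arboreal hypersurface need only coincide with \emph{some} $C^0$-close smoothing of $H_{\mathcal{F}}$---patched into a single global ambient isotopy by a stratified (Thom--Mather) isotopy-extension argument; the generalized case is identical with $\mathcal{F}^+$ replacing $\mathcal{F}$.

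I expect this globalization to be the main obstacle. Conditions \ref{c:an} and \ref{c:int} only supply, respectively, a local normal form at each corner and pairwise and iterated transversality of corners; assembling these into one ambient isotopy that simultaneously matches every nested tangency datum---and exploiting the freedom in the choice of $C^0$-close smoothing of $H_{\mathcal{F}}$ to absorb the discrepancies that \ref{c:an} leaves unconstrained away from the corners---is where the real work lies. A subsidiary point to check carefully is the consistency of the reconstructed ancestry (in particular that $k_i$ is genuinely a level function, i.e.\ the parent of $v_i$ has level $k_i-1$), which uses the uniqueness clause of \ref{c:an} together with \ref{c:int} but is otherwise routine.
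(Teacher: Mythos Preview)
The paper states Proposition~\ref{p:arbhyp} without proof: it is presented as a ``characterization to check whether a given singular hypersurface in $\R^n$ is an arboreal hypersurface,'' and the text moves immediately to the next proposition. So there is no argument in the paper to compare your proposal against; the author evidently regards the statement as a repackaging of the explicit construction of smoothed arboreal hypersurfaces (the functions $g_0^\alpha$ and the smoothing submersion $f$) that the reader can unwind.

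Your proposal is a reasonable outline for actually filling this in, and you have correctly located where the content lies. The forward direction is indeed a computation with the model, and your identification of the codimension-$r$ corners of $H_\alpha$ with the vanishing loci of ancestor coordinates, and of the chain in condition~\ref{c:an} with the ancestral chain in the tree, is exactly right. For the converse, your reconstruction of the forest from the nested chains via the uniqueness clause of~\ref{c:an} is the right mechanism; one point to tighten is the claim that the parent relation is independent of the chosen deepest-corner point $p$, which needs a connectedness argument on the corner strata. Your self-diagnosis of the globalization step is accurate: conditions~\ref{c:an} and~\ref{c:int} pin down the local picture near each corner stratum and the mutual position of strata from disjoint chains, and the passage to a single ambient isotopy onto the model is a stratified isotopy-extension argument of the kind you describe. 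That step is genuine work but standard in spirit, and since the definition of arboreal hypersurface only asks for ambient isotopy to \emph{some} $C^0$-close smoothing of $H_{\mathcal F}$, you have the flexibility you note to absorb discrepancies away from the corners. In short: the paper omits the proof, and your sketch is a sound plan for supplying one, with the hardest part honestly flagged.
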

		
		A similarly straightforward statement about arboreal hypersurfaces is the following.
		
		\begin{prop}
			Suppose $H_i\subset \R^n$ are arboreal hypersurfaces for $i=1,\cdots, k$. Then there exist perturbations $\phi_i:\R^n\to \R^n$ arbitrarily close to the identity such that $\cup_{i=1}^k\phi_i(H_i)\subset \R^n$ is a hypersurface with only arboreal hypersurface singularities.
		\end{prop}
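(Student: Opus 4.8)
The plan is to deduce this from the characterization of arboreal hypersurfaces in Proposition~\ref{p:arbhyp}, by reducing the claim to finitely many transversality conditions that a generic perturbation arranges. First I would fix, for each $i$, the decomposition of $H_i$ into its smooth sheets together with the nested families of corner strata along which those sheets are tangent to higher and higher order, as supplied by Proposition~\ref{p:arbhyp}; since $H_i$ is an arboreal hypersurface, conditions \ref{c:an} and \ref{c:int} hold among the pieces of this decomposition. The key first observation is that applying a diffeomorphism $\phi_i$ changes nothing about the \emph{internal} structure of $H_i$: arboreal hypersurfaces are defined only up to ambient isotopy of $\R^n$, so $\phi_i(H_i)$ is again an arboreal hypersurface of the same combinatorial type, and in particular conditions \ref{c:an}, \ref{c:int} restricted to pieces coming from a single $\phi_i(H_i)$ continue to hold verbatim, no matter which (small) $\phi_i$ is chosen. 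Thus every potential failure of the characterization for $\bigcup_i\phi_i(H_i)$ must involve pieces of $\phi_i(H_i)$ meeting pieces of $\phi_{i'}(H_{i'})$ with $i\neq i'$.

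Next I would make this interaction generic. There are only finitely many sheets and corner strata among all the $H_i$, so it suffices to choose $(\phi_1,\dots,\phi_k)$ in a finite-dimensional family of perturbations arbitrarily close to the identity --- small generic translations already suffice --- so that (a) no sheet of $\phi_i(H_i)$ is anywhere tangent to a sheet of $\phi_{i'}(H_{i'})$ for $i\neq i'$, and (b) the corner strata of the various $\phi_i(H_i)$ are in general position across distinct indices: whenever $i_1,\dots,i_m$ are distinct and $D_a$ is an intersection of corner strata of $\phi_{i_a}(H_{i_a})$ that is already transverse by condition~\ref{c:int}, the submanifolds $D_1,\dots,D_m$ meet transversally in $\R^n$, with empty intersection whenever the codimensions overcount. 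Each of (a), (b) is a residual, hence dense, condition on the tuple of perturbations, by standard parametric transversality arguments (translating one piece by a free parameter is a submersion, so Thom--Sard applies to each of the finitely many incidences, and a finite intersection of residual sets is residual). This is where the hypothesis that each $H_i$ is \emph{already} arboreal is used: we never need, and cannot afford, to move sheets within a single $H_i$ relative to one another.

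Finally I would check that (a) and (b) imply the hypotheses of Proposition~\ref{p:arbhyp} for the union. Given a point $p$ in a codimension-$k$ corner of a sheet $H\subset\phi_i(H_i)$, condition (a) guarantees that no sheet of a different $\phi_{i'}(H_{i'})$ is tangent to $H$ at $p$; hence the unique chain of higher-order-tangent corner strata required by \ref{c:an} (or, in the generalized case, (1')) is precisely the one provided inside $\phi_i(H_i)$, which also shows that the ``associated set'' of any corner stratum of the union lies entirely within the single $\phi_i(H_i)$ containing it. Then, if $C_1,\dots,C_m$ are corner strata of the union with pairwise disjoint associated sets, I would group them by which $\phi_i(H_i)$ they belong to: within each group, condition \ref{c:int} for $\phi_i(H_i)$ already makes their intersection a transverse submanifold $D_i$ of the predicted dimension, and condition (b) then makes $C_1\cap\cdots\cap C_m$, which is the intersection of these $D_i$ over the distinct indices, transverse of the predicted dimension. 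This is exactly condition \ref{c:int} for $\bigcup_i\phi_i(H_i)$, so the union is a hypersurface with only arboreal hypersurface singularities.

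The step I expect to be the main obstacle is the bookkeeping in this last paragraph: one must verify carefully that the elementary general-position conditions (a), (b) really do produce the combinatorial conditions \ref{c:an}/(1')/\ref{c:int} for the union --- in particular that the perturbation neither destroys the arboreal structure internal to any $H_i$ (it does not, since $\phi_i(H_i)\cong H_i$) nor creates a tangency between distinct $H_i$ (forbidden by (a)) --- and that the ``associated sets'' and dimension counts have been correctly tracked after passing to the union. The genericity inputs behind (a) and (b) are routine applications of Thom transversality and Sard's theorem and I would not dwell on them.
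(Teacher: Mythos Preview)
The paper does not actually prove this proposition: it introduces it with the sentence ``A similarly straightforward statement about arboreal hypersurfaces is the following'' and leaves it at that, just as it leaves Proposition~\ref{p:arbhyp} unproved. So there is no argument in the paper to compare yours against; your proposal is precisely the kind of fleshing-out the author is implicitly inviting.

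Your strategy is the natural one and is sound. Reducing to the characterization in Proposition~\ref{p:arbhyp}, observing that ambient diffeomorphisms preserve each $H_i$'s internal arboreal structure, and then imposing finitely many generic-position conditions on the cross-interactions is exactly how one makes ``similarly straightforward'' precise. One small point worth tightening in the bookkeeping: when you verify condition~\ref{c:an} for the union, you should read the uniqueness clause there as uniqueness among sheets \emph{tangent} to the given corner (the ``moreover'' clause is part of what singles out the chain); otherwise a transverse sheet from another $\phi_{i'}(H_{i'})$ passing through a corner point would appear to spoil uniqueness even though it contributes only to condition~\ref{c:int}. With that reading, your condition~(a) handles~\ref{c:an} and your condition~(b), combined with the internal~\ref{c:int} of each $H_i$, handles~\ref{c:int} for the union, as you say.
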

		
		\begin{remark} An arboreal hypersurface is the union of smooth (non-compact) hypersurfaces (codimension one). In contrast, a generic front projection of a Legendrian is Whitney stratified but need not be the union of smooth codimension one submanifolds.\end{remark}
		
		\subsubsection{Translating between arboreal Legendrians and Lagrangians}
		{
			An arboreal Legendrian submanifold of a contact manifold was defined in \cite{N1} to be a singular Legendrian such that near each singular point, there is a  local front projection whose image is an arboreal hypersurface. By definition, if an arboreal Legendrian of dimension $n-1$ lies in the unit co-tangent bundle $S^*M$ such that its corresponding front projection is an arboreal hypersurface, the union of the zero section with the co-normal cone of the Legendrian is an arboreal Lagrangian of dimension $n$. Conversely, the intersection of an arboreal skeleton of a Weinstein manifold with a contact type hypersurface yields an arboreal Legendrian submanifold. 
			
			Another way to get from an arboreal Legendrian singularity in a contact manifold to an arboreal Lagrangian singularity in a symplectic manifold is to take the Lagrangian projection. Namely, in a neighborhood of the singular point where the Reeb flow acts freely, mod out by the Reeb flow lines. The Legendrian condition implies that the image of each stratum in the arboreal Legendrian is mapped by an exact Lagrangian embedding into the Lagrangian projection.
		}
		
			\subsection{Weinstein homotopies near an invariant isotropic submanifold} \label{s:Whtpy}
			The primary mechanism to manipulate a Weinstein structure via a homotopy is to focus on its behavior restricted to an isotropic submanifold along which the Liouville vector field $V$ is tangent. The details of this are covered in \cite[Chapter 12]{CE}, and in this section we review the relevant results from that chapter and adapt them to the following statement which we will use repeatedly to manipulate the skeleton through Weinstein homotopies. The primary use of this proposition will be in the case that $\Delta$ is the stable manifold of a component of Morse-Bott* zeros. In this case, $\Delta$ satisfies the property that the Liouville vector field is repelling in the normal directions to $\Delta$. More precisely, if $(q,n)$ are local coordinates such that $\Delta=\{n=0\}$, then there exists $\varepsilon$ such that $\varepsilon \rho \leq V\cdot \rho \leq \varepsilon^{-1}\rho$. $\varepsilon$ depends on the minimal positive eigenvalue of the zeros in $\Delta$.
			
			\begin{prop}\label{p:isotropichtpy}
				Let $\Delta$ be an isotropic submanifold (possibly with boundary) of $(W,\omega,V,\phi)$ such that $V$ is tangent to $\Delta$ and $V$ is repelling in the normal directions to $\Delta$. Let $(Y_t,\psi_t)$ be a compactly supported 1-parameter family of Lyapunov pairs on $\Delta$ with $(Y_0,\psi_0)=(V|_{\Delta},\phi|_{\Delta})$. Suppose the eigenvalues of $Y_0$ have real part $<1$ (true if $\Delta$ is a stable manifold). Then there exists a homotopy of Weinstein structures $(W,\omega, V_t,\phi_t)$ such that
				\begin{itemize} 
					\item $(V_0,\phi_0)=(V,\phi)$
					\item $(V_1|_{\Delta},\phi_1|_{\Delta})=(\lambda Y_1,\psi_1)$ for some constant $0<\lambda\leq 1$
					\item $(V_t,\phi_t)=(V,\phi)$ outside a neighborhood $U$ of $\Delta$
					\item $V_t$ is non-vanishing in $U\setminus \Delta$.
					\item If $\Delta_1,\cdots, \Delta_m$ are other isotropic submanifolds (possibly with boundary) where $V$ is tangent to each $\Delta_\ell$ and each $\Delta_\ell$ intersects $\Delta$ cleanly along a smooth submanifold $\Sigma_\ell$ (possibly non-compact and/or with boundary) which is invariant under the flow of $V_t$, then we can ensure that in smaller neighborhood $\widetilde{U}\subset U$ of a compact $C\subset \Delta$, $V_t$ is tangent to $\Delta_\ell \cap \widetilde{U}$ for all $\ell=1,\cdots, m$.
				\end{itemize}
			\end{prop}
			
			\begin{proof}
				
				The core of this statement relies on \cite[Lemma 12.8]{CE}. This lemma considers the following construction on a neighborhood of the isotropic submanifold $\Delta$. For simplicity of notation, it is assumed that the symplectic normal bundle is trivial, which will suffice in our applications. Therefore a neighborhood $U$ of $\Delta$ can be identified symplectically with $T^*\Delta\times \C^\ell$ with coordinates $(p_i,q_i,x_j,y_j)$. 
				
				Under the clean intersection hypothesis, we can choose these coordinates $(p_i,q_i,x_j,y_j)$ so that $\Delta_\ell \cap \widetilde{U}$ is preserved by the negative flow of $\sum_i p_i\partial_{p_i}+\frac{1}{2}\sum_j x_j\partial_{x_j}+y_j\partial_{y_j}$. 
				
				Given any vector field $Y$ on $\Delta$ we obtain a Liouville vector field on $(U,\omega)$
				$$\hat{Y}=\sum_i p_i\partial_{p_i}+\frac{1}{2}\sum_j x_j\partial_{x_j}+y_j\partial_{y_j}-X_{H}$$
				where $X_{H}$ is the Hamiltonian vector field associated with the function $H = \langle \sum_i p_idq_i, Y(q)\rangle$. The restriction of $\hat{Y}$ to $\Delta$ agrees with $Y$. Given a function $\psi$ on $\Delta$, we get an associated function on $U$:
				$$\hat{\psi} = \psi(\vec{q})+|\vec{p}|^2+|\vec{x}|^2+|\vec{y}|^2.$$
				\begin{lemma}{\cite[Lemma 12.8]{CE}}
					Suppose that all eigenvalues at zeros of $Y$ have real part $<1$. Let $K\subset \Delta$ be compact. Then the pair $(\hat{Y},\hat{\psi})$ has the following properties.
					\begin{itemize}
						\item The zeros of $\hat{Y}$ coincide with the zeros of $Y$ and have equidimensional null and stable spaces.
						\item If $\rho(q,p,x,y)=|\vec{p}|^2+|\vec{x}|^2+|\vec{y}|^2$, then there exists some $\varepsilon>0$ such that $\hat{Y}\cdot \rho\geq \varepsilon\rho$.
						\item If $Y$ is gradient-like for $\psi$ then $\hat{Y}$ is gradientlike for $\hat{\psi}$ near $K$.
						\item Suppose $Y=X|_\Delta$ where $X$ is a vector field defined on a neighborhood of $\Delta$ which is gradient-like for a function $\phi$. Then under an identification of the neighborhood of $\Delta$ with $T^*\Delta\times \C^\ell$ which equates $E_p^+(X)=T^*_p\Delta\times \C^\ell$ at each zero of $Y$, $\hat{Y}$ is gradient-like for $\phi$.
					\end{itemize}
				\end{lemma}
				
				We will use this lemma to obtain Weinstein structures $(\hat{Y}_t,\hat{\psi}_t)$ defined in a neighborhood of $\Delta$ which agree with $(\lambda(t)Y_t,\psi_t)$ on $\Delta$ for some $\lambda:[0,1]\to (0,1]$. The purpose of the rescaling is to ensure the hypothesis that all eigenvalues at zeros of $Y$ have real part $<1$. Let $\lambda(t)=1/f(t)$ where $f(t)\geq 1$, $f(t)$ is equal to $1$ near $0$, and $f(t)\geq$ the largest eigenvalue of any zero of $Y_t$. Set $\lambda=\lambda(1)$.
				
				Now perform the construction above to construct for each $t\in[0,1]$, $(\hat{Y}_t,\hat{\psi}_t)$ from $(\lambda(t)Y_t,\psi_t)$ (note that $\lambda(t)Y_t$ has the same zeros and flow-lines as $Y_t$ and $\lambda(t)Y_t$ is still gradient-like for $\psi_t$). Observe that $\hat\psi$ has no critical points off of $\Delta$ and correspondingly $\hat{Y}_t$ is non-vanishing off of $\Delta$.
				
				Next we need to splice the Weinstein structures $(\hat{Y}_t,\hat{\psi}_t)$ defined on the neighborhood of $\Delta$ into $(V,\phi)$. This can be done using \cite[Lemma 12.10]{CE} after verifying the necessary hypotheses as follows. We have chosen our identification such that for each zero $q_0$ of $Y$, the $(q_0,p,x,y)$ fibers for a fixed $q_0$ are identified with the unstable manifold of $V$ at $(q_0,0,0,0)$. Therefore if $\rho(q,p,x,y)=|p|^2+|x|^2+|y|^2$, there is a constant $\varepsilon$ determined by the minimal non-vanishing eigenvalue of $V$ along its zero set, such that $\varepsilon \rho \leq V\cdot \rho \leq \varepsilon^{-1}\rho$ in a neighborhood $U$ of $\Delta$, by the normal repelling condition. Therefore by \cite[Lemma 12.10]{CE} there exist Weinstein structures $(\widetilde{V}_t,\widetilde{\phi}_t)$ on $W$ which agree with $(V,\phi)$ outside the neighborhood $U$ of $\Delta$, with $(\hat{Y}_t,\hat{\psi}_t)$ on a smaller neighborhood of $\Delta$, and which have no critical points in $U\setminus \Delta$.
				
				Therefore, the Weinstein structures $(\widetilde{V}_t,\widetilde{\phi}_t)$ have the following desired properties
				\begin{itemize} 
					\item $(\widetilde{V}_1|_{\Delta},\widetilde{\phi}_1|_{\Delta})=(\lambda Y_1,\psi_1)$ where $0<\lambda=\lambda(1)\leq 1$
					\item $(\widetilde{V}_t,\widetilde{\phi}_t)=(V,\phi)$ outside a neighborhood $U$ of $\Delta$
					\item $V_t$ is non-vanishing in $U\setminus \Delta$.
				\end{itemize}
				
				The only missing piece to conclude the proof, is that we need to pre-concatenate this family with a homotopy of Weinstein structures from $(V,\phi)$ to $(\widetilde{V}_0,\widetilde{\phi}_0)$.
				
				The construction of the Liouville vector field $\widetilde{V}_t$ in Lemma 12.10 is obtained by observing that $V-\hat{Y}_t=X_{H_t}$ for some Hamiltonian functions, choosing a cut-off function $f=f(\rho)$ supported on $U$, and setting $\widetilde{V}_t=V+X_{fH_t}$. Because $\hat{Y}_0$ is also gradient-like for $\phi$, it is verified in the proof of \cite[Lemma 12.9]{CE} that the straight line homotopy between $V$ and $\widetilde{V}_0$ gives a homotopy of Liouville vector fields which are all gradient-like for $\phi$. The space of Lyapunov functions for a fixed vector field $\widetilde{V}_0$ is convex, so we can then use a straight line homotopy between $\phi$ and $\widetilde{\phi}_0$ to get a Weinstein homotopy from $(\widetilde{V}_0,\phi)$ to $(\widetilde{V}_0,\widetilde{\phi}_0)$. Pre-concatentating these two straight line homotopies with $(\widetilde{V}_t,\widetilde{\phi}_t)$ completes the proof.
			\end{proof}

	\section{Lagrangian bones, joints, and Weinstein arboreal singularities} \label{s:bones}

		\subsection{Bones and thickening}
		
		\begin{definition} If $Z$ is a connected component of zeros of a Morse-Bott* Liouville vector field $V$ and $\stab(Z)$ is a Lagrangian submanifold, we say that $\Delta=\stab(Z)$ is a \emph{bone} of the skeleton. The corresponding zero set $Z\subset \stab(Z)$ is the \emph{marrow} of the bone $\Delta$. 
		\end{definition}
		
		Note that a bone has boundary if and only if its marrow $Z$ does, and it is non-compact (but has compact closure in $W$) if $\stab(Z)\setminus Z$ is non-empty. 
		
		\begin{definition}
			The points in a bone in its boundary $\partial (\stab(Z))=\stab(\partial Z)$ will be called the \emph{exterior boundary} of the bone $\stab(Z)$.
		\end{definition}
		
		
		\begin{definition}
			The \emph{index} of a bone is the dimension of the stable manifold of a single point in the marrow (which is independent of the point).
		\end{definition}
			 
		In a Lagrangian bone of index $k$ whose marrow is a disk $D^{n-k}$, the complement of the marrow $\Delta\setminus Z$ will be diffeomorphic to $S^{k-1}\times D^{n-k+1}$.

		Next we prove that every stable manifold of a component of zeros which is isotropic of dimension $<n$ can be thickened to a Lagrangian bone. Moreover, we have some control over how to perform this thickening which we will utilize in section \ref{s:tangential}.
		
		\begin{prop}\label{p:thicken}
			Let $Z_0$ be a $d$-dimensional connected component of index $k$ zeros of a Liouville vector field $V_0$ with stable manifold $\stab_{V_0}(Z_0)$, and unstable manifold $\unstab_{V_0}(Z_0)$ with trivial normal bundle in the Weinstein manifold $(W^{2n},\omega_0,V_0,\phi_0)$. Using any choice of 
			
			\begin{itemize}
			\item A submanifold $Z_1$ which is the image of an $n-k$ dimensional isotropic embedding of $Z_0\times D^{n-k-d}$ into $\unstab_{V_0}(Z_0)$ such that $Z_0\times\{0\}$ is identified with $Z_0$ and $TZ_1\cap (T\unstab_{V_0}(Z_0))^{\perp_{\omega_0}}=0$, and
			\item Lagrangian foliation $\cF$ of $\unstab_{V_0}(Z_0)$ in a neighborhood of $Z_1$ such that there is a unique point of $Z_1$ in each leaf, 
			\end{itemize}
			
			then there exist a Weinstein homotopy of $(W,\omega_0,V_0,\phi_0)$ to $(W,\omega_1,V_1,\phi_1)$ such that $V_1$ has index $k$ zeros along $Z_1$, $\stab_{V_1}(Z_1)$ is a Lagrangian bone, and the unstable manifolds of the points of $Z_1$ agree with the leaves of the foliation $\cF$ in a neighborhood of $Z_1$.
		\end{prop}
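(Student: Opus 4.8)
The plan is to build the target Weinstein structure $(V_1,\phi_1)$ by hand in a normal form near the marrow, and then to realize the transition from $(V_0,\phi_0)$ by the splicing technique underlying Proposition~\ref{p:isotropichtpy}, i.e.\ \cite[Lemmas~12.8--12.10]{CE}. First I would fix a local model near $Z_0$: using the isotropic neighborhood theorem for $Z_0$, the splitting $T_{Z_0}W=TZ_0\oplus E^+\oplus E^-$ coming from $DV_0$, and the triviality of the normal bundle of $\unstab_{V_0}(Z_0)$, I would produce Darboux-type coordinates in which $V_0$ is a standard linear radial Liouville field, $Z_0$, $\stab_{V_0}(Z_0)$ and $\unstab_{V_0}(Z_0)$ are coordinate subspaces, $Z_1$ is a linear isotropic subspace of $\unstab_{V_0}(Z_0)$, and $\cF$ is a linear Lagrangian foliation transverse to $Z_1$. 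Here the hypotheses do their work: $(T\unstab_{V_0}(Z_0))^{\perp_{\omega_0}}$ is the characteristic subspace of the coisotropic $\unstab_{V_0}(Z_0)$, spanned by the $\omega_0$-partners of the stable directions (eigenvalues with real part $>1$), so $TZ_1\cap(T\unstab_{V_0}(Z_0))^{\perp_{\omega_0}}=0$ says exactly that $Z_1$ is transverse to the characteristic foliation; together with the isotropy of $Z_1$ (which forces the $\omega_0$-partner of each tangent direction of $Z_1$ transverse to $Z_0$ to lie again in $E^+$ and to avoid the eigenvalue-$1$ directions), this puts these ``new'' directions and their partners into coordinate pairs carrying $DV_0$-eigenvalues in the open interval $(0,1)$.

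In the chart the target field is the linear radial Liouville field $V_1$ with zero set $Z_1$, stable directions the old $k$ stable directions of $V_0$, and unstable directions their symplectic partners, with the weights of each conjugate pair renormalized to sum to $1$; together with the Morse--Bott quadratic $\phi_1$ whose critical set is $Z_1$ one checks directly that $(V_1,\phi_1)$ is Morse--Bott* of index $k$, that $\stab_{V_1}(Z_1)$ is the (Lagrangian) coordinate subspace spanned by the $k$ stable directions and the new directions, and that $\unstab_{V_1}(z)$ is exactly the leaf of $\cF$ through $z$. The straight-line homotopy $V_t$ from $V_0$ to $V_1$ --- lowering the radial weights of the new directions from their values in $(0,1)$ to $0$ while raising their partners' weights to $1$ --- is Liouville for all $t$, and, paired with a compatible homotopy $\phi_t$ of Morse--Bott quadratics keeping the critical set equal to $Z_0$ for $t<1$ and enlarging it to $Z_1$ only at $t=1$, is a Weinstein homotopy in the chart.

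To globalize, away from the chart the target bone $\Delta_1:=\stab_{V_1}(Z_1)$ must be taken to be the union of $Z_1$ with a family of stable $k$-disks forced along the flow by the requirement $\unstab_{V_1}(z)=\cF_z$; I would check that $\Delta_1$ is an embedded Lagrangian submanifold with boundary and, after a preliminary homotopy supported near $Z_1$ making $Z_1$ (hence $\Delta_1$) $V$-invariant --- again using that transversality to the characteristic foliation lets $Z_1$ descend to a Lagrangian in the symplectic reduction of $\unstab_{V_0}(Z_0)$ --- that $V_0$ is tangent to $\Delta_1$ with eigenvalues of real part $<1$ along its zeros. Then $V_0-V_1=X_H$ for a Hamiltonian $H$ supported near $Z_1$, and damping $H$ radially in the symplectic-normal directions of $\Delta_1$ exactly as in the proof of Proposition~\ref{p:isotropichtpy} yields a Weinstein homotopy equal to $(V_0,\phi_0)$ outside a neighborhood of $Z_1$, non-vanishing off the critical sets, and gradient-like throughout. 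The step I expect to be the main obstacle is the behavior along the boundary $\partial Z_1$ of the new marrow: one must arrange that $Z_1$ is \emph{boundary repellent} for $V_1$, so that $V_1$ vanishes on all of $Z_1$ but has, in the $D^{n-k-d}$-radial direction, a residual outward component vanishing to infinite order at $\partial Z_1$, producing the unique outward flow line required by the Morse--Bott* definition. This means the homotopy near $\partial Z_1$ must be performed with flat, non-analytic weight functions rather than the literal straight-line homotopy, and it must be kept compatible both with the foliation $\cF$ --- so that its leaves are the unstable manifolds along all of $Z_1$, not just near $Z_0$ --- and with the localization, so that the skeleton is undisturbed away from a neighborhood of $Z_1$.
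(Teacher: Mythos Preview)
Your approach is essentially the same as the paper's: straighten coordinates on a neighborhood of $Z_1$ so that $\unstab_{V_0}(Z_0)$, its characteristic distribution, $Z_1$, and the foliation $\cF$ all become standard, and then invoke the splicing machinery of Proposition~\ref{p:isotropichtpy} (i.e.\ \cite[Lemmas~12.8--12.10]{CE}) to install the new Morse--Bott* structure. Your use of the hypothesis $TZ_1\cap (T\unstab_{V_0}(Z_0))^{\perp_{\omega_0}}=0$ to place $Z_1$ in a symplectic slice transverse to the characteristic foliation is exactly what the paper does, and your reduction of $\cF$ to cotangent fibers of $T^*Z_1$ matches the paper's second coordinate step.

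The one substantive difference is how the core homotopy is built. You take a straight-line interpolation of linear Liouville fields, lowering the radial weights on the new $q$-directions to zero; this produces an unbounded linear family of zeros at $t=1$ and forces you to treat the boundary-repellent condition at $\partial Z_1$ as a separate, somewhat delicate, patching step with flat weight functions. The paper instead packages this into Lemma~\ref{l:thicken}: working on the prospective Lagrangian bone $\Delta_1$ itself (the $(q,x)$-plane), it damps the radial Lyapunov pair by a compactly supported cutoff $f(|q|^2)$, so that the family $(Y_t,\psi_t)$ already has a \emph{compact disk} of zeros at $t=1$ with the correct outward residual flow along its boundary. This single device simultaneously creates the Morse--Bott disk, enforces boundary repellence, and keeps the modification compactly supported, after which a direct appeal to Proposition~\ref{p:isotropichtpy} finishes. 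Your route is correct, but the paper's cutoff-on-the-bone construction avoids precisely the obstacle you flagged.
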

		
		\begin{cor}
			Every Weinstein manifold is Weinstein homotopic to one whose skeleton is the union of Lagrangian bones.
		\end{cor}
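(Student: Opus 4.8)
The plan is to reduce to a Morse Weinstein structure and then apply Proposition~\ref{p:thicken} once to each critical point, concatenating the resulting Weinstein homotopies.

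First I would use the standard fact (see \cite{CE}) that any Weinstein manifold is Weinstein homotopic to one whose Lyapunov function $\phi$ is Morse; by the finite type hypothesis the Liouville field $V$ then has only finitely many zeros $p_1,\dots,p_m$, which are isolated, of indices $k_1,\dots,k_m\le n$. If $k_i=n$ then the stable manifold $\stab(p_i)$ is an $n$-dimensional isotropic, hence Lagrangian, submanifold, so it is already a Lagrangian bone (with marrow the point $p_i$) and needs no modification. If $k_i<n$, the hypotheses of Proposition~\ref{p:thicken} are available with $Z_0=\{p_i\}$, so $d=0$: the unstable manifold $\unstab_V(p_i)$ is an embedded disk and hence has trivial normal bundle; an embedded isotropic disk $Z_1\cong D^{\,n-k_i}\subset\unstab_V(p_i)$ through $p_i$ with $TZ_1\cap(T\unstab_V(p_i))^{\perp_\omega}=0$ exists, obtained by choosing an isotropic $(n-k_i)$-plane in $T_{p_i}\unstab_V(p_i)$ transverse to the characteristic distribution and extending it over a small disk; and, after shrinking $Z_1$, a germ along $Z_1$ of a Lagrangian foliation $\cF$ of $\unstab_V(p_i)$ meeting $Z_1$ in exactly one point in each leaf exists. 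Proposition~\ref{p:thicken} then produces a Weinstein homotopy after which the zeros near $p_i$ form the submanifold $Z_1$, with $\stab(Z_1)$ a Lagrangian bone and the unstable manifolds of points of $Z_1$ equal to the leaves of $\cF$.

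I would then iterate this over $i=1,\dots,m$, processing the critical points in order of non-decreasing index and concatenating the homotopies (a finite composition of Weinstein homotopies is again a Weinstein homotopy). The homotopy produced by Proposition~\ref{p:thicken} for $p_i$ can be taken supported in an arbitrarily small neighborhood of $\stab(p_i)\cup Z_1$ --- this is part of the control the proposition affords --- so it leaves $V$ unchanged near the unprocessed zeros $p_j$ and creates no new zeros; hence the hypotheses above remain available at each stage, and after the $m$-th step each connected component of the zero set is one of the submanifolds $Z_1^{(i)}$ (or a leftover index-$n$ point), with Lagrangian stable manifold. The skeleton is the union of these stable manifolds, so it is a union of Lagrangian bones.

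The step I expect to be the main obstacle is arranging that thickening $p_i$ does not destroy the Lagrangian bones already produced from the lower-index critical points: near its exterior boundary $\stab(p_i)$ accumulates onto those bones, so the support of the thickening homotopy necessarily meets them. I would handle this with the freedom in the choice of data in Proposition~\ref{p:thicken} --- the analogue, inherited from Proposition~\ref{p:isotropichtpy}, of the clean-intersection/invariance clause there --- by arranging that each previously constructed bone $\Delta'$ stays invariant under the new Liouville field and hence remains a Lagrangian bone. Processing in order of index is what makes this possible: by the time an index-$k$ handle is thickened, the lower-index part of the skeleton is already a union of $n$-dimensional Lagrangian bones, and it is precisely into these that one has the room to isotropically embed (and foliate around) the thickening, while keeping those bones tangent to the relevant leaves.
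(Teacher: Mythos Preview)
Your argument is essentially the one the paper has in mind: the corollary is stated without a separate proof, as an immediate consequence of Proposition~\ref{p:thicken} applied to each subcritical zero, and your reduction to a Morse structure followed by iterated thickening is exactly that.

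One comment on the ``main obstacle'' you flag. The support of the homotopy produced by Proposition~\ref{p:thicken} (via Lemma~\ref{l:thicken} and Proposition~\ref{p:isotropichtpy}) lies in an arbitrarily small neighborhood of the new zero disk $Z_1\subset\unstab_{V_0}(p_i)$, not in a neighborhood of $\stab(p_i)$. Since $\unstab_{V_0}(p_i)$ lives at $\phi$-values $\ge\phi(p_i)$, this support is automatically disjoint from every stable manifold $\stab(Z_1^{(j)})$ with $\phi(p_j)<\phi(p_i)$, so the already-thickened bones are literally untouched; no clean-intersection clause or tangency arrangement is needed. The simplest bookkeeping is therefore to process the critical points in increasing $\phi$-value (rather than by index), after which the concern you raise disappears. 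The only effect of thickening $p_i$ on the rest of the skeleton is a possible deformation of the (not yet thickened) stable manifolds of higher-$\phi$ critical points passing through the neighborhood of $Z_1^{(i)}$, which is harmless since those are handled at a later step.
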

		
		\begin{proof}
						
			We will line up the specified submanifolds and foliation with a model on $T^*Z_0\times \C^{n-d}$, by choosing appropriate coordinates on a neighborhood $U$ of $Z_1$.
			
			First, we consider the submanifold $\unstab(Z_0)$ in the neighborhood $U$. This is a co-isotropic submanifold of dimension $2n-k$, so there is a $k$-dimensional isotropic sub-bundle $(T\unstab(Z_0))^{\perp_{\omega_0}}\subset T\unstab(Z_0)$. Choose coordinates $x_1,\cdots, x_k,y_1,\cdots, y_k$ on $U$ such that $\unstab(Z_0)=\{x_1=\cdots=x_k=0\}$, $(T\unstab(Z_0))^{\perp_{\omega_0}}$ is spanned by $(\partial_{y_1},\cdots, \partial_{y_k})$, $Z_1\subset \{y_1=\cdots=y_k=0\}$ (which can be arranged by the assumption $TZ_1\cap (T\unstab_{V_0}(Z_0))^{\perp_{\omega_0}}=0$). Observe that $\{x_1=\cdots=x_k=y_1=\cdots=y_k\}$ is a symplectic $2(n-k)$ dimensional sub-manifold (it is isomorphic to the symplectic reduction of $\unstab_{V_0}(Z_0)\cap U$), so we can arrange this choice of coordinates symplectically so that on $U$, $$\omega_0 = \sum_{i=1}^k dx_i\wedge dy_i + \omega_M$$ where $\omega_M=i_M^*\omega_0$.
			
			The leaves of the Lagrangian foliation $\cF$ on the co-isotropic submanifold $\unstab_{V_0}(Z_0)$ necessarily satisfy $$\spn(\partial_{y_1},\cdots, \partial_{y_k})=(T\unstab_{V_0}(Z_0))^{\perp_{\omega_0}}\subset T\cF_z.$$
			Therefore, $\cF \cap M$ gives a Lagrangian foliation of $M$ such that each leaf contains a unique point of $Z_1$. Since $Z_1$ is an isotropic $(n-k)$ dimensional submanifold of $M$, it is Lagrangian and has a standard neighborhood modeled on its co-tangent bundle, and we can identify the co-tangent fibers with the Lagrangian foliation $\cF\cap M$. We can choose the coordinates $q_j,p_j$ for this identification so that $Z_1=\{p_1=\cdots=p_{n-k}=0\}$, $Z_0 = \{q_{d+1}=\cdots=q_{n-k}=p_1=\cdots=p_{n-k}=0\}$, and the leaves of $\cF\cap M$ are $\{q_1=q_1^0,\cdots, q_{n-k}=q_{n-k}^0\}$.
			
			
			Then the result follows from the following lemma.
		\end{proof}
				
		\begin{lemma}\label{l:thicken}
			There exists a Weinstein homotopy from the standard radial Weinstein structure on $(\C^n,\sum_i dp_i\wedge dq_i)$ with a unique index zero critical point at the origin, to a Weinstein structure $(V_1,\phi_1)$ such that
			\begin{itemize} 
				\item The vanishing set $Z$ of $V_1$ is a Lagrangian disk in the zero section,
				\item $(V_1,\phi_1)$ agrees with the canonical co-tangent Liouville vector field on $T^*\R^n$ in a neighborhood of $Z$, 
				\item $(V_1,\phi_1)$ agrees with the standard radial structure outside a larger neighborhood $U$ of $Z$, and
				\item $Skel(\C^n,\omega_{std},V_1,\phi_1)=Z$.
			\end{itemize}
		\end{lemma}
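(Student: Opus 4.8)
The plan is to produce this homotopy as a single application of Proposition~\ref{p:isotropichtpy}, with the invariant isotropic $\Delta$ taken to be the target Lagrangian disk itself. Identify $\C^n$ with $T^*\R^n$ so that $\omega_{std}=\sum_i dp_i\wedge dq_i$ and the standard radial structure is $V_0=\tfrac12\sum_i(p_i\partial_{p_i}+q_i\partial_{q_i})$, $\phi_0=\tfrac14(|\vec p|^2+|\vec q|^2)$. Let $Z=\{\vec p=0,\ |\vec q|\le 1\}\subset\{\vec p=0\}$ be the closed unit disk in the zero section; it is a Lagrangian (in particular isotropic) submanifold with boundary, $V_0$ is tangent to it, and $V_0|_Z=\tfrac12\sum_i q_i\partial_{q_i}$ is the radial field on $D^n$ with Lyapunov function $\phi_0|_Z=\tfrac14|\vec q|^2$. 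Since $Z$ lies inside the stable manifold of the origin, the eigenvalues of $V_0|_Z$ at its zero are $<1$, so the hypotheses of Proposition~\ref{p:isotropichtpy} are met.

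Next I would feed in the homotopy of Lyapunov pairs on $D^n$ that simply rescales the vector field to zero and flattens the function, e.g.
\[
(Y_t,\psi_t)=\Bigl((1-t)\cdot\tfrac12\textstyle\sum_i q_i\partial_{q_i},\ (1-t)\cdot\tfrac14|\vec q|^2\Bigr),
\]
so that $(Y_0,\psi_0)=(V_0|_Z,\phi_0|_Z)$ and $(Y_1,\psi_1)\equiv (0,0)$ (which is a valid, if degenerate, Morse-Bott* Lyapunov pair on $D^n$ with critical locus all of $D^n$). For $t<1$ this is an honest Lyapunov pair and the eigenvalues of $Y_t$ at its zero are $(1-t)/2<1$ throughout, so no further rescaling is needed and the output constant of Proposition~\ref{p:isotropichtpy} is $\lambda=1$. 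Proposition~\ref{p:isotropichtpy} then yields a Weinstein homotopy $(V_t,\phi_t)$ on $\C^n$ with $(V_0,\phi_0)$ the standard radial structure, equal to the standard radial structure outside a neighborhood $U$ of $Z$ (hence compatible with the exhaustion of $\C^n$ by balls of large radius, so a genuine Weinstein homotopy), with $V_t$ non-vanishing on $U\setminus Z$, and with $V_1|_Z=0$. Inspecting the construction in the proof of that proposition, near $Z$ the structure is the associated pair $(\hat Y_1,\hat\psi_1)$; since the Hamiltonian $H_1=\langle\sum_i p_i\,dq_i,\,Y_1\rangle$ vanishes identically (as $Y_1=0$), we get $\hat Y_1=\sum_i p_i\partial_{p_i}$ and $\hat\psi_1=|\vec p|^2$, i.e.\ on a smaller neighborhood of $Z$ the pair $(V_1,\phi_1)$ is exactly the canonical co-tangent Liouville field of $T^*\R^n$ together with a standard Lyapunov function.

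It remains to compute the vanishing set and the skeleton. Outside $U$, $V_1$ is the radial field, with unique zero the origin; on $U\setminus Z$ it is non-vanishing; and on the small neighborhood where $V_1=\sum_i p_i\partial_{p_i}$, its zeros form the slice $\{\vec p=0\}$, which is $Z$. Hence the zero set of $V_1$ is precisely $Z$, a Lagrangian disk in the zero section, and $Skel(\C^n,\omega_{std},V_1,\phi_1)=\stab(Z)$ since all zeros lie in $Z$. The one point that genuinely needs an argument — and which I expect to be the main (though short) obstacle — is that $\stab(Z)$ is not larger than $Z$, i.e.\ nothing from the transition region $U\setminus Z$ flows into $Z$. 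But if $w\in\stab(Z)$ then for all large $t$ the point $\phi_1^t(w)$ lies in the neighborhood on which $V_1=\sum_i p_i\partial_{p_i}$, where the trajectory through $(\vec q,\vec p)$ is $s\mapsto(\vec q,e^s\vec p)$; such a trajectory stays in that neighborhood and is bounded only if $\vec p=0$, in which case it is the constant trajectory at a zero of $V_1$. Running this backwards along the flow shows $w\in Z$, so $\stab(Z)=Z$ and the skeleton equals $Z$. (If one additionally wants $V_1$ to be Morse-Bott* with $\partial Z$ boundary repellent, which is the form actually needed for Proposition~\ref{p:thicken}, one instead ends the homotopy of Lyapunov pairs at a pair vanishing on a slightly smaller concentric disk and outward-radial in a collar of $\partial D^n$; then the marrow is that smaller disk, $\partial Z$ is boundary repellent, and $V_1$ still agrees with the co-tangent Euler field over the interior of $Z$.)
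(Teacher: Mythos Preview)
Your application of Proposition~\ref{p:isotropichtpy} with $\Delta=Z=D^n$ and the bare scaling homotopy $(Y_t,\psi_t)=((1-t)Y_0,(1-t)\psi_0)$ does not go through. The splicing in the proof of Proposition~\ref{p:isotropichtpy} uses a cutoff $f(\rho)$ with $\rho=|\vec p|^2$, i.e.\ it only interpolates between $\hat Y_t$ and $V_0$ in the \emph{cotangent} direction. There is no mechanism to transition in the base direction $\vec q$. Since your $(Y_t,\psi_t)$ changes all the way out to $\partial D^n$, the lifted field $\hat Y_1=\sum_i p_i\partial_{p_i}$ disagrees with the radial $V_0$ on the zero section at $|\vec q|=1$; so either the spliced $V_1$ is discontinuous there, or (if you let the construction run on all of $\R^n_{\vec q}$, which your formulas allow) you get $V_1=\sum_i p_i\partial_{p_i}$ on the entire zero section, with zero locus all of $\{\vec p=0\}$ rather than a disk. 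In either case the conclusions ``$V_1=V_0$ outside a neighborhood of $Z$'' and ``zero set $=Z$'' fail. The ``compactly supported'' hypothesis in Proposition~\ref{p:isotropichtpy} should be read as constant near $\partial\Delta$ when $\Delta$ has boundary; your family is not.

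The paper handles this by taking $\Delta$ to be the full Lagrangian plane $\R^n_{\vec q}$ and building the base cutoff into the Lyapunov pair itself: with a bump function $f(|\vec q|^2)$ equal to $1$ on a small disk and $0$ outside a slightly larger one, it sets $\psi_t=(1-tf(|\vec q|^2))|\vec q|^2$ and $Y_t=\nabla\psi_t$, so that $(Y_t,\psi_t)\equiv(Y_0,\psi_0)$ for $|\vec q|^2\ge 2\delta$ and $Y_1$ vanishes exactly on $\{|\vec q|^2\le\delta\}$. Then Proposition~\ref{p:isotropichtpy} applies cleanly. Your parenthetical remark (``end at a pair vanishing on a smaller disk and outward-radial on a collar'') is pointing at exactly this correction, but it is not optional: once you implement it with a cutoff in $|\vec q|$, you recover essentially the paper's argument.
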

		
		\begin{proof}
			
			Let $f(|q|^2)$ be a monotonically decreasing cut-off function which is $1$ when $|q|^2\leq \delta$ and vanishes for $|q|^2\geq 2\delta$. Consider the family of functions $\psi_t = (1-tf(|q|^2))|q|^2$ on the isotropic $(q_1,\cdots, q_n)$ plane. The gradient using the standard metric (rescaled by a factor of $\frac{1}{4}$) gives the following family of vector fields on the $(q_1,\cdots, q_n)$ plane
			$$Y_t = \frac{1}{2}\left(1-tf(|q|^2)-tf'(|q|^2)|q|^2 \right)\sum_i q_i \partial_{q_i}.$$ 
			Observe that $Y_t(\vec{q},\vec{p})=0$ if and only if $\vec{q}=0$ or $t=f(|q|^2)=1$. Additionally, $Y_0 = \frac{1}{2}\sum q_i\partial_{q_i}$ is the restriction of the standard radial Liouville vector field $\frac{1}{2}\sum p_i\partial_{p_i}+q_i\partial_{q_i}$ to the $(q_1,\cdots, q_n)$ plane. Also, $\psi_0 = |q|^2$ is the restriction of the radial function $|p|^2+|q|^2$ to the $(q_1,\cdots, q_n)$ plane. Note that the eigenvalues of this vector field at the origin are $\frac{1}{2}<1$. Apply Proposition \ref{p:isotropichtpy} to this family to get the desired result.

		\end{proof}

		\subsection{Holonomy and notions of genericity}
		
			Because we need each stable manifold component to be Lagrangian in order to have the strata of the skeleton determine its Weinstein neighborhood, we have to work with non-generic Weinstein structures $(V,\phi)$. However, we would like to be able to perturb this structure within the class of Weinstein structures with Lagrangian bones. The obvious perturbations which preserve the Morse-Bott* families of zeros are perturbations of $(V,\phi)$ supported away from the zero set of $V$.
			
			In a subdomain $U$ of a Weinstein manifold $(W,\omega, V,\phi)$ where $V$ is non-vanishing, the flow of $V$ determines a symplectic cobordism structure on $U$ from the part of $\partial U$ where $V$ points inward, $\partial_-U$, to the part of $\partial U$ where $V$ points outward, $\partial_+U$. $V$ determines a contact structure, $\xi_-$, on $\partial_-U$ and another, $\xi_+$, on $\partial_+U$ and the flow of $V$ gives a contactomorphism $\Gamma: (\partial_+U,\xi_+) \to (\partial_-,\xi_-)$ called the holonomy.
			
			By \cite[Lemma 12.5]{CE} the holonomy of a Weinstein cobordism can be modified by any contact isotopy by a homotopy of Liouville vector fields which remain gradient-like for the fixed Weinstein function.  We will call such Weinstein homotopies \emph{holonomy homotopies}.
			
			In particular, any isotopy of contact isotropic submanifolds can be realized by an ambient contact isotopy, which can then be incorporated into the holonomy. The notion of genericity which we will regularly employ is stability of the skeleton under small contact perturbations of the holonomy in domains where the Liouville vector field is non-vanishing.

		\subsection{Neighborhoods and intersections of Lagrangian bones}
			
			Because the stable manifolds of distinct zeros are disjoint, two bones $\Delta_i$ and $\Delta_j$ are also disjoint. However, their closures $\overline{\Delta_i}$ and $\overline{\Delta_j}$ may intersect non-trivially.
			
			Each bone $\Delta$ is a Lagrangian submanifold, possibly with boundary. Therefore it has a unique symplectic neighborhood modeled on its co-tangent bundle. The model for the Liouville vector field depends on the marrow and index of the bone. The marrow $Z$ of an index $k$ Lagrangian bone is $n-k$ dimensional so that the union of the $k$-dimensional stable manifolds of each of the points in the zero set is $n$ dimensional (Lagrangian). The bone $\Delta$ splits as $Z\times \R^k$ with local coordinates $q_i$ on $Z$ and $x_i$ on $\R^k$. The neighborhood of the bone splits as $T^*Z \times (\R^2)^{k}$ with local coordinates $(p_j,q_j)$ on $T^*Z$ and coordinates $(x_i,y_i)$ on $(\R^2)^k$ such that the Liouville vector field vanishes along $Z=\{x_i=y_i=p_j=0\}$, the stable manifold of $Z$ is $\Delta=\{y_i=p_j=0\}$, and the unstable manifold of $Z$ is $\{x_i=0\}$. The unstable manifold of each zero in the marrow of $\Delta$ is $n$-dimensional (spanned by the $\partial_{p_i}$ and $\partial_{y_i}$ directions), and the union of all of these unstable manifolds is $2n-k$ dimensional.
			
			
			Another bone, $\Delta_2$ may enter the neighborhood $N$ of the marrow of a bone $\Delta_1$ as a $V$-invariant submanifold. A neighborhood of the marrow of $\Delta_1$ will intersect $\Delta_2$ precisely when there is a Liouville flow line from the marrow of $\Delta_1$ to the marrow of $\Delta_2$, or equivalently when the stable manifold of $Z_2$ intersects the unstable manifold of $Z_1$.
			
			\begin{definition}
				We say $\Delta_2$ has a \emph{joint} $H$ on $\Delta_1$ if $H=\overline{\Delta_2}\cap \Delta_1$.
			\end{definition}
			
			In this case, the $V$-invariant bone $\Delta_2$ intersects $\partial N$ in the region where $V$ is outwardly transverse so $\partial N$ has positive contact type. This intersection $\Lc=\Delta_2\cap \partial N$ gives a Legendrian since $\Delta_2$ is $V$-invariant. 
			
			\begin{prop}\label{p:extbdry}
				A Weinstein manifold whose skeleton has Lagrangian bones is holonomy homotopic to a Weinstein manifold such that any joint $H$ of one bone $\Delta_2$ on another bone $\Delta_1$ is disjoint from the exterior boundary of $\Delta_1$.
			\end{prop}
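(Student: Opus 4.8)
The plan is to realize the statement by holonomy homotopies that push the trace of the accumulating bone $\Delta_2$ on a contact-type hypersurface near the marrow $Z_1$ away from the trace of the exterior boundary $\partial\Delta_1 = \stab(\partial Z_1)$, supplemented by the flexibility in the construction of bones from Proposition~\ref{p:thicken}.

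First I would reduce to a statement localized near the marrow boundary $\partial Z_1$. Both $\partial\Delta_1 = \stab(\partial Z_1)$ and each joint $H = \overline{\Delta_2}\cap\Delta_1$ are $V$-invariant (the latter being the intersection of the $V$-invariant sets $\overline{\Delta_2}$ and $\Delta_1$), and $\overline{\Delta_2}$ is closed, so flowing $H\cap\partial\Delta_1$ toward the skeleton shows that $H$ meets the exterior boundary if and only if $\overline{\Delta_2}$ meets $\partial Z_1$ --- equivalently, if and only if some Liouville trajectory runs from $\partial Z_1$ into $Z_2$. So it is enough to eliminate, one ordered pair $(\Delta_1,\Delta_2)$ at a time, all trajectories from $\partial Z_1$ to $Z_2$.

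To do this I would work in the model neighborhood $N\cong T^*Z_1\times(\R^2)^k$ of the marrow and fix $\partial N$ so that every trajectory escaping a neighborhood of $\partial Z_1$ crosses the positive contact-type part $\partial_+ N$ transversally; this uses the boundary repellent normal form, which controls the escaping trajectories (those along $E^+$ together with the distinguished outward center direction supplied by the boundary repellent hypothesis). On $\partial_+ N$ the bone $\Delta_2$ traces the Legendrian $\Lc = \Delta_2\cap\partial_+ N$ from the discussion preceding the proposition, and the escaping locus of $\partial Z_1$ traces a subset $\mathcal E$; a $\lambda$-lemma argument, adapted from the hyperbolic case to the Morse--Bott*/boundary repellent setting, should show that a trajectory from $\partial Z_1$ to $Z_2$ exists precisely when $\overline{\Lc}$ meets $\overline{\mathcal E}$. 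By \cite[Lemma 12.5]{CE} every contact isotopy of $\Lc$ inside $\partial_+ N$ is realized by a holonomy homotopy of the surrounding Weinstein cobordism, so it remains to displace $\overline{\Lc}$ off $\overline{\mathcal E}$ by such an isotopy. When the index $k$ of $\Delta_1$ is large relative to $n$ the dimensions of $\Lc$ and $\mathcal E$ are small enough in $\partial_+ N$ that a generic $C^\infty$-small contact isotopy of $\Lc$ already achieves this; in the remaining cases I would additionally exploit the freedom in the thickening data for $\Delta_1$ in Proposition~\ref{p:thicken}, repositioning $\partial Z_1$ (hence $\mathcal E$) so that the escaping locus is carried off $\Delta_2$. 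After this homotopy $\overline{\Delta_2}$ contains no point of $\partial Z_1$, so the joint on $\Delta_1$ is disjoint from the exterior boundary; and since the perturbations are $C^\infty$-small and supported in small neighborhoods of single $\partial_+ N$'s away from the other marrows, iterating over the finitely many pairs of bones does not spoil the pairs already treated or create new accumulations elsewhere.

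The step I expect to be the main obstacle is the analysis at $\partial Z_1$. The marrow boundary is a family of degenerate (Morse--Bott) zeros carrying a center direction along which the boundary repellent condition forces a subtle, non-infinitesimal escape, so the standard hyperbolic shadowing estimates underlying the $\lambda$-lemma must be rebuilt in this normal form before the equivalence ``$H$ meets the exterior boundary $\Leftrightarrow$ $\overline{\Lc}$ meets $\overline{\mathcal E}$'' is rigorous; and it is precisely the coisotropic rather than isotropic nature of the escaping locus $\unstab(\partial Z_1)$ that makes the displacement of $\overline{\Lc}$ off $\overline{\mathcal E}$ require more than a bare general-position argument in the low-index range, so that the interplay with the thickening flexibility of Proposition~\ref{p:thicken} has to be set up with care.
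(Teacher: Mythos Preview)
Your overall strategy---pass to the positive contact hypersurface $\partial N$ near the marrow $Z_1$ and use a holonomy homotopy to displace the Legendrian $\Lc=\Delta_2\cap\partial N$ from the trace $\mathcal{E}$ of $\unstab(\partial Z_1)$---is exactly the paper's approach. What you are missing is the single structural observation that dissolves all of your anticipated obstacles: $\mathcal{E}$ is not an unmanageable coisotropic set but a \emph{small} disk-neighborhood of a contact \emph{isotropic} embedding of $\partial Z_1$ into $\partial_+N$. Concretely, $\unstab(\partial Z_1)\cap\partial N$ has the form $\partial Z_1\times D^n$ whose core $\partial Z_1\times\{0\}$ is isotropic of dimension $n-k-1\le n-1$, and the disk factor can be taken arbitrarily thin by shrinking $N$.

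With this in hand the displacement is uniform in the index $k$: a Legendrian and an isotropic of dimension at most $n-1$ in a $(2n-1)$-dimensional contact manifold are generically disjoint, so a $C^\infty$-small contact isotopy of $\Lc$ pushes it off the core and hence off a sufficiently thin $\mathcal{E}$. There is no low-index/high-index dichotomy, no need to invoke the thickening freedom of Proposition~\ref{p:thicken} to reposition $\partial Z_1$, and no Morse--Bott $\lambda$-lemma is required. Your identified ``main obstacle''---the coisotropic nature of the escaping locus and the degenerate dynamics at $\partial Z_1$---is precisely what the paper dispatches in one sentence by recognizing the isotropic core inside $\mathcal{E}$.
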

			
			\begin{proof}
				
			Let $Z_1$ denote the marrow of $\Delta_1$. The unstable manifold of $\partial Z_1$ intersects the boundary $\partial N$ of a small neighborhood $N$ of $Z_1$ in the positive contact part of $\partial N$. $\unstab(\partial Z_1)\subset \partial N$ is a small open neighborhood of a contact isotropic embedding of $\partial Z_1$ (see figure \ref{fig:extbdry}).
			
			\begin{figure}
				\centering
				\includegraphics[scale=.4]{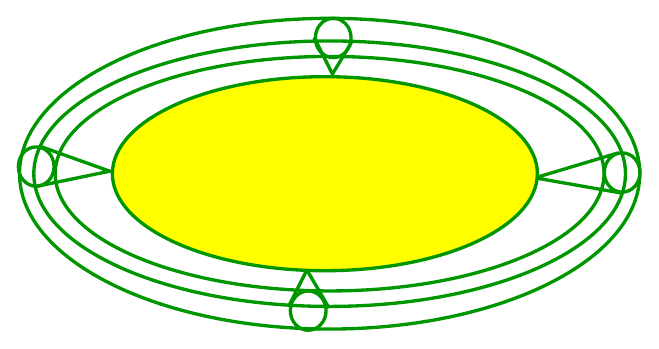}
				\caption{The unstable manifold of $\partial Z_1$ intersects the boundary of a neighborhood as $Z_1\times D^n$ where the core $Z_1\times\{0\}$ is contact isotropically embedded in $\partial N$.}
				\label{fig:extbdry}
			\end{figure}
						
			Within the positive contact portion of $\partial N$ we may perform a contact isotopy, and incorporate this into the holonomy by a Weinstein homotopy supported near $\partial N$. This allows us to perform a Legendrian isotopy of $\Lc=\Delta_2\cap \partial N$ so that it is disjoint from the unstable manifold of points in the exterior boundary $\partial Z_1$ which is a small neighborhood of a contact isotropic submanifold in $\partial N$.
			\end{proof}
			
			\begin{remark}
				This Proposition will often be applied without mention as we perform Weinstein homotopies modifying the skeleton. Particularly each time thickening is performed by applying Proposition \ref{p:thicken}, it may be necessary to subsequently use Proposition \ref{p:extbdry} to push the joints away from the exterior boundary of the newly thickened bones.
			\end{remark}
			
			The same type of holonomy homotopies allows us to perform small perturbations of the Legendrian to ensure various genericity assumptions. For example, by a contact isotopy perturbation, we may assume that $\Delta_2$ intersects $U^{2n-k}:=\unstab(Z_1)$ transversally in $N$. Let $\cK = \Delta_2 \cap U^{2n-k}\cap \partial N$. Then $\dim(\cK)=n-k-1$ and $\cK$ is an isotropic submanifold in the positive contact portion of $\partial N$. While $U^{2n-k}$ itself is co-isotropic, the slice where all $y_i=0$ (in the coordinates above) is a copy of $T^*Z_1$ (and is equivalent to the symplectic reduction of $U^{2n-k}$). We can ensure our coordinates are chosen so that $\cK$ is contained in this slice, since the transverse $\partial_{x_i}$ directions will be tangent to $\Delta_2$ near $\Delta_1$ since these stable flow lines in $\Delta_1$ glue to those in $\stab(Z_2)\cap \unstab(Z_1)$ to give unbroken flowlines in $\stab(Z_2)$. 
			
			Therefore viewing $\cK$ in the sphere cotangent bundle $S^*Z_1$, the map which takes a point $k\in \cK$ in the unstable manifold of the point $z\in Z_1$ to $z$ is a \emph{front projection}. Such a front projection of a Legendrian is generically an embedding away from a set of positive co-dimension in $\cK$. The image of the front projection is a hypersurface $J^{n-k-1}$ in $Z_1$ (which can be singular when the projection fails to be an embedding). Because $\Delta_2\cap N$ is $n$-dimensional and is invariant under backwards Liouville flow, and $\overline{\Delta_2}\cap Z_1 = J^{n-k-1}$, we must have $\overline{\Delta_2}\cap \Delta_1 = \stab(J^{n-k-1})$ (the union of the $k$-dimensional stable manifolds of each of the points in $J$). The joint $H^{n-1}=\stab(J^{n-k-1})=\overline{\Delta_2}\cap \Delta_1$ is therefore a hypersurface of $\Delta_1$ diffeomorphic to $J^{n-k-1}\times \R^k$. The unique $V$-invariant Lagrangian in $N$ with boundary on $H^{n-1}$ is described in terms of the splitting $N=T^*Z_1\times (\R^2)^k$ as the positive co-normal to $J^{n-k-1}$ in the $T^*Z_1$ factor times the stable $\R^k=\{y_i=0\}$ in the $(\R^2)^k$ factor. In particular, $\Delta_2\cap N$ lies in the slice where $\{y_i=0\}$, and is determined by the contact isotropic manifold $\cK^{n-k-1}\subset \partial N$ or equivalently its front projection to $Z_1^{n-k}$.
			
			We note and summarize a few immediate properties of joints between generic bones.
			
			\begin{lemma}[Properties of joints]\label{l:joints}
				\begin{enumerate}
					\item \label{jtn} If $\Delta_2$ has a non-empty joint on $\Delta_1$, the index of $\Delta_1$ must be strictly less than $n$.
					\item \label{jtord} If $\Delta_2$ has a non-empty joint on $\Delta_1$ then $\Delta_1$ cannot have a non-empty joint on $\Delta_2$.
					\item \label{jttr} If $\Delta_3$ has a non-empty joint on $\Delta_2$, and $\Delta_2$ has index $k>0$, then there exists a bone $\Delta_1$ such that both $\Delta_2$ and $\Delta_3$ have a non-empty joint on $\Delta_1$.
					\item \label{jtcon} For any bone $\Delta_1$ and compact subset $C\subset \Delta_1$, there is a neighborhood $U$ of $C$ and coordinates $(q,p)$ on $U$ identifying it with a subset of $T^*\Delta_1$ such that $Skel(W,\omega,V,\phi)\cap U$ is the union of the zero section with the co-oriented co-normal of the joints $H\cap C$ on $\Delta_1$.
				\end{enumerate}
			\end{lemma}
			
			Note that property \ref{jtcon} implies that every joint in a skeleton comes with a canonical co-orientation.
			
			\begin{proof}
				Statement \ref{jtn} follows because $\cK^{n-k-1}$ can only be the empty set if $k=n$. 
				
				Statement \ref{jtord} follows from the Weinstein condition. If $\Delta_2$ has a non-empty joint on $\Delta_1$, then the value of the Weinstein function $\phi$ on the marrow of $\Delta_2$, $\phi(Z_2)$ must be strictly greater than $\phi(Z_1)$.
				
				For statement \ref{jttr}, the fact that $\Delta_3$ has a non-empty joint on $\Delta_2$ implies that there is a point $z$ in the marrow of $\Delta_2$ whose unstable manifold intersects the stable manifold of $\Delta_3$. Since the index of $\Delta_2$ is greater than zero, the stable manifold of $z$ contains some point $p\neq z$. $p$ must be in the unstable manifold of some zero in the Weinstein manifold, which lies in the marrow of some bone $\Delta_1$. Then there is a broken flow-line from $\Delta_1$ through $\Delta_2$ to $\Delta_3$, which by gluing has a nearby unbroken flowline from $\Delta_1$ to $\Delta_3$.
				
				Statement \ref{jtcon} is equivalent to the statement above that a bone near its joint $H=J\times \R^k$ arises as the co-normal of $J$ times $\R^k_{(x_1,\cdots, x_k)}$ in $T^*Z_1\times (\R^2)^k$, which is the same as the co-normal to $H$ in $T^*(Z_1\times \R^k)$.
			\end{proof}
			
			\subsection{Classification of generic non-tangential singularities of skeleta} \label{s:genclass}
			
			We have seen that we can arrange that every two bones $\Delta_1$ and $\Delta_2$ meet (at most) along a joint $H$ which is a hypersurface on the interior of one of the bones, (say $\Delta_1$) such that the other bone $\Delta_2$ emanates as a co-normal Lagrangian in a neighborhood of the intersection. On the boundary of the neighborhood $N$ of $\Delta_1$, the other bone $\Delta_2$ intersects $\partial N$ as a Legendrian $\Lc$. The joint $H$ where $\Delta_1$ and $\Delta_2$ come together can be understood as the \emph{front projection} of $\Lc$ to $\Delta_1$. More specifically, we have a front projection to $Z_1$ of the intersection of $\cK^{n-k-1}=(\Lc\cap \unstab(Z_1))\subset S^*Z_1$, and then $\Lc \cong \cK\times \R^k$. Typically, the front projection of a Legendrian $\cK$ in $S^*Z_1$ has \emph{singularities of tangency} where tangent bundle $T\cK$ nontrivially intersects the tangent spaces to the sphere fiber leaves of the Legendrian foliation of $S^*Z_1$. These tangencies are places where the front projection fails to be an immersion, so the image hypersurface can develop singularities. We will address these singularities in section \ref{s:tangential}. For now, we show that these singularities of tangency are the only singularities we need to eliminate. 
			
			\begin{theorem}\label{thm:notang}
				Let $(W,\omega, V,\phi)$ be a Weinstein manifold whose skeleton has Lagrangian bones with joints disjoint from the exterior boundaries. Assume that for any pair of Lagrangian bones with non-empty joint, that the front projections of one bone onto another has no singularities of tangency. Then, after a generic holonomy perturbation, the skeleton has finitely many singularity types classified combinatorially by signed rooted trees with at most $n$ vertices corresponding to Nadler's arboreal singularities or by signed rooted trees with at most $n$ vertices with additional marked leaves corresponding to Nadler's generalized arboreal singularities.
			\end{theorem}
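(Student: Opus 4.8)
The plan is to reduce the global statement to a bone-by-bone assertion about the hypersurfaces cut out by joints, and then to induct on the bones ordered by the value of the Weinstein function on their marrows. By Lemma~\ref{l:joints}(\ref{jtcon}), near any compact subset $C$ of a bone $\Delta_1$ the skeleton is the union of the zero section of a cotangent neighborhood of $C$ together with the co-oriented co-normals of the joints lying on $\Delta_1$. Since the co-normal of a smoothed signed arboreal hypersurface (resp.\ generalized arboreal hypersurface) is by construction exactly the Lagrangian model attached to the corresponding signed rooted tree (resp.\ leafy rooted tree), it suffices to produce, by a holonomy perturbation, a Weinstein structure in which the union of all joints on every bone $\Delta_1$ is a smoothed signed (generalized) arboreal hypersurface inside $\Delta_1$; this we would check against the criterion of Proposition~\ref{p:arbhyp}. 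Note that this reduction automatically produces the canonical co-orientation data needed for the signed version.

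Next I would set up the induction. Order the finitely many bones by increasing value of $\phi$ on their marrows; by Lemma~\ref{l:joints}(\ref{jtord}) a joint of $\Delta'$ on $\Delta$ forces $\phi(Z')>\phi(Z)$, and after a generic holonomy perturbation making $\Delta'$ transverse to $\unstab(Z)$ it also forces the index of $\Delta'$ to exceed that of $\Delta$, so the ``has a joint on'' relation is a strict partial order whose height is bounded in terms of $n$. A bone maximal for this order carries no joints and is a smooth Lagrangian, so there is nothing to do. Inductively, suppose $\Delta_1$, of index $k_1$, carries joints $H_2,\dots,H_r$ coming from strictly higher bones $\Delta_2,\dots,\Delta_r$, each already arborealized. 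Here $H_i=J_i\times\R^{k_1}$, where $J_i\subset Z_1$ is the front projection to the marrow of the Legendrian $\cK_i=\Delta_i\cap\unstab(Z_1)\cap\partial N_1\subset S^*Z_1$. The key claim is that after a holonomy perturbation along $\partial N_1$ each $H_i$ is itself a smoothed arboreal hypersurface: since $\Delta_i$ is arboreal, $\cK_i$ is an arboreal Legendrian (\S\ref{s:arb}); the no-tangency hypothesis says $\cK_i\to Z_1$ is an immersion, so $J_i$ has no cuspidal or deeper singularities, and away from the other joints a $C^\infty$-small holonomy perturbation (a Legendrian perturbation of $\cK_i$, via \cite[Lemma 12.5]{CE}) makes its self-intersections transverse, hence arboreal; where $J_i$ meets another front $J_j$, Lemma~\ref{l:joints}(\ref{jttr}) forces $\Delta_j$ to have a joint on $\Delta_1$ too, and the arboreal stratification $\Delta_i$ induces on $\cK_i$ projects — using no-tangency together with the Legendrian condition, exactly as in Nadler's local analysis \cite{N1,N2} — to strata of $J_i$ satisfying the tangency-order and transversality conditions (\ref{c:an})--(\ref{c:int}). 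Products with $\R^{k_1}$ preserve all of this.

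With each $H_i$ arboreal, I would then invoke the proposition following Proposition~\ref{p:arbhyp}: a $C^\infty$-small perturbation $\phi_i$ of each $H_i$, realized by a contact isotopy of $\Lc_i=\Delta_i\cap\partial N_1$ absorbed into the holonomy along $\partial N_1$, makes $\bigcup_i\phi_i(H_i)$ a hypersurface with only arboreal-hypersurface singularities. Because this perturbation is supported near $\partial N_1$, which sits at the level $\phi(Z_1)$ beneath all previously treated bones and their joints, it does not disturb the arboreality already achieved near the higher bones, so the induction closes. Joints that retain a boundary — those left near an exterior boundary of a higher bone after Proposition~\ref{p:extbdry}, or those truncated by a higher joint — contribute marked leaves, yielding the generalized arboreal models of \cite{N2}, while the canonical co-orientations of the joints supply the edge signs. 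By Lemma~\ref{l:joints}(\ref{jtcon}) the skeleton near $\Delta_1$ is then the zero section together with the co-normal of this signed (generalized) arboreal hypersurface, i.e.\ a signed (generalized) arboreal Lagrangian; at a point $p$ interior to $\Delta_1$ the bones whose closures contain $p$ organize into a signed rooted tree with root $\Delta_1$ and one further vertex per joint through $p$, which by the dimension of $\Delta_1$ has at most $n$ vertices, so only finitely many types occur in dimension $2n$.

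The main obstacle I anticipate is precisely the step that each joint $H_i$ is arboreal: the projection of $\Delta_i$ to $\Delta_1$ is dictated by the Liouville dynamics rather than chosen, so one must argue that the no-tangency hypothesis is exactly what makes this forced front projection arboreal, and at the same time track carefully how the (inductively arboreal) singular stratification of $\Delta_i$ propagates down to $J_i\subset Z_1$ and interacts there with the remaining joints — verifying conditions (\ref{c:an})--(\ref{c:int}) of Proposition~\ref{p:arbhyp} in this propagated configuration is where the real work lies. A secondary, more bookkeeping-level difficulty is arranging the holonomy perturbations so that arborealizing one bone does not reverberate through the Liouville flow and spoil bones already treated; this is what the top-down ordering and the confinement of each perturbation to a neighborhood of $\partial N_1$ are designed to handle, and it is also the feature that foreshadows the localization procedure needed for the tangential singularities in \S\ref{s:tangential}.
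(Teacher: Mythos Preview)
Your overall reduction is the same as the paper's: by Lemma~\ref{l:joints}(\ref{jtcon}) it is enough to show that on every bone the union of joints is a smoothed signed (generalized) arboreal hypersurface, and this is to be achieved by holonomy perturbations. The paper, however, organizes the induction differently, and the difference matters for exactly the step you flag as the main obstacle.

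You induct on bones ordered by $\phi$-value and decompose the joints on $\Delta_1$ as $H_2,\dots,H_r$, one per bone $\Delta_i$, then invoke the proposition after Proposition~\ref{p:arbhyp} to perturb the $H_i$ independently. That step is where the argument breaks. If $\Delta_j$ has a joint on $\Delta_i$ and both have joints on $\Delta_1$, then $H_j$ is a hypersurface-with-boundary whose boundary lies on $H_i$ with $T_pH_j=T_pH_i$ along it; this tangency is precisely what condition~(\ref{c:an}) of Proposition~\ref{p:arbhyp} records, and it is forced by the Liouville geometry, not something a generic perturbation produces. Perturbing $H_i$ and $H_j$ independently destroys it. So the bone-indexed pieces are not the right packets to feed into the ``union of arboreal hypersurfaces is arboreal'' proposition.

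The paper's fix is to work pointwise and induct on the number $m$ of bones whose closures contain a given point $s\in\Delta_1$. One lifts $s$ along the front projection to the finite set of preimages $t_1,\dots,t_\ell$ in the Legendrian in $\partial N$; each $t_r$ lies on some higher bone with strictly fewer (namely $m_r<m$) bones in its closure, so by induction the skeleton is arboreal there, hence the local Legendrian $\cK_r$ near $t_r$ is an arboreal Legendrian and its (no-tangency, hence generic) front projection is an arboreal hypersurface. The crucial point is that this grouping by $t_r$ is exactly the grouping by conormal direction at $s$: bones $\Delta_i,\Delta_j$ with $H_j$ tangent to $H_i$ share the same lift $t_r$, so they sit in the \emph{same} packet and are never perturbed against one another; distinct packets have distinct conormal directions at $s$ and can therefore be made transverse by independent holonomy isotopies supported near the distinct $t_r$. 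This yields the rooted tree at $s$ by attaching the trees $T_1,\dots,T_\ell$ for the packets to a new root, with the edge signs read off from which side of $H_i$ the tangent $H_j$ lies. Your $\phi$-value induction can be made to work, but only after you replace the bone-by-bone decomposition with this conormal-direction decomposition; as written, the independent-perturbation step is the gap.
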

			
			\begin{proof}
			Any point $s$ in the skeleton lies on the interior of a unique bone $\Delta_1$ and if it is a singular point it lies on the joints of a collection of bones $\Delta_2,\cdots, \Delta_m$ with $\Delta_1$. Let $H_2,\cdots, H_m \subset \Delta_1$ be the joints corresponding to $\Delta_2,\cdots \Delta_m$. Then in a neighborhood $N$ of $\Delta_1$, $\Delta_j$ is the positive co-normal to $H_j$. It suffices to show that after a generic perturbation of holonomy in a neighborhood of the point $s\in \Delta_1$, the hypersurface $H_2\cup\cdots\cup H_m$ is arboreal.

			The arboreal type of the singularity is classified by a signed rooted tree with $m$ vertices, possibly with additional marked leaves. To determine this tree, we proceed by induction on $m$.
			
			If $m=1$, then either $s$ is in the interior of $\Delta_1$ and thus is a smooth point of the skeleton or on the exterior boundary of $\Delta_1$ and thus is a smooth boundary point of the skeleton. If it is on the interior, we associate the tree with a unique vertex (where the root is the unique vertex): $\bullet$. If $s$ is on the exterior boundary of $\Delta_1$ then we associate the tree with a unique vertex (being the root), and append a leaf to this vertex: $\bullet^{\lf}$.
			
			Now suppose $m>1$, and label by $\Delta_1$ the unique open bone containing $s$. Then $s$ is in the intersection of the joints $H_2,\cdots, H_m$ of $\Delta_j$ with $\Delta_1$ for $j=2,\cdots, m$. By assumption since the $H_j$ are disjoint from the exterior boundary of $\Delta_1$, $s$ is in the interior of $\Delta_1$. If $\Delta_1$ has index $k$, then the joints $H_j$ have the form $J_j\times \R^k$ where $J_j=H_j\cap Z_1$. By the product symmetry, it suffices to assume that $s\in Z_1$. Let $N$ be the standard neighborhood of $Z_1$. Then $(\Delta_2\cup \cdots \cup \Delta_m)\cap \partial_+ N$ gives a (possibly singular, possibly disconnected) Legendrian whose front projection on $\Delta_1$ is $H_2\cup \cdots \cup H_m$. The pre-image of $s$ under this front projection is a set of $\ell\leq m-1$ distinct points $t_1,\cdots, t_\ell$ corresponding to the $\ell\leq m-1$ distinct positive co-normal directions to the co-oriented hypersurfaces $H_2,\cdots H_m$.
			
			Now consider the skeleton in a neighborhood of one of these lifts $t_r$. There is a subset of bones $\{\Delta_{j_1^r},\cdots, \Delta_{j_{m_r}^r}\}\subset\{\Delta_2,\cdots, \Delta_m\}$ containing $t_r$ in their closure. This is because any bone which contains $t_r$ in its closure must contain the closure of the flow-line of $V$ through $t_r$ and thus must contain $s$. Therefore $m_r<m$, so by the inductive hypothesis, the skeleton has a (generalized) arboreal singularity at $t_r$ represented by a rooted (leafy) tree $T_r$.
			
			Now we claim that after a contact perturbation of the Liouville vector field supported in a neighborhood of $\Delta_1$, the singularity at $p$ is arboreally indexed by the signed, leafy rooted tree obtained from the disjoint union $T_1,\cdots, T_\ell$ of the trees associated to $t_1,\cdots, t_\ell$, such that the root of each of $T_1,\cdots, T_\ell$ is joined by an edge to a new root:
			
			$$\xymatrix{ T_1 & T_2 & \cdots & T_r\\
				& & \bullet \ar@{-}[ull] \ar@{-}[ur] \ar@{-}[u] \ar@{-}[ul]& }$$
			
			The inductive hypothesis does not label the edges adjacent to the roots of the $T_i$ by signs. Instead, we determine these signs from the relevant co-oriented hypersurfaces in $\Delta_1$. Each such edge corresponds to two bones $\Delta_j$ and $\Delta_{j'}$ whose joints on $\Delta_1$ are as follows. The joint $H_j$ of $\Delta_j$ with $\Delta_1$ is a smooth co-oriented hypersurface near $s$. The closure of the joint $H_{j'}$ of $\Delta_{j'}$ on $\Delta_1$ is a hypersurface with boundary, such that near $s$, $H_j\cap \overline{H_{j'}} = \partial \overline{H_{j'}}$ and $T_p(H_j)=T_p(\overline{H_{j'}})$ for $p\in \partial \overline{H_{j'}}$. The hypersurface $H_j\subset \Delta_1$ is locally separating and co-oriented. If $H_{j'}$ lies on the side of $H_j$ compatible with the co-orientation, let $\sigma_{j,j'}=+1$. If $H_{j'}$ lies on the opposite side of $H_j$ than indicated by the co-orientation, let $\sigma_{j,j'}=-1$. The models for these hypersurfaces are products with $\R^M$ of the models in figure \ref{fig:sgnA2}.

			Let $N$ be a small neighborhood of $\Delta_1$, and let $\cK_r$ denote the local Legendrian near $t_r$ in $\partial N\cap Skel(W,\omega, V,\phi)$. Because $V$ is non-vanishing and outward pointing to $\partial N$ along $\cK_r$, the skeleton near $t_r$ is modeled locally by $\cK_r\times \R$ in the symplectization $\partial N\times \R$. Since the skeleton is arboreal near $t_r$, and $\partial N$ intersects each stratum of the skeleton transversally near $t_r$, the Legendrian $\cK_r$ must also have only arboreal singularities. In other words, the Lagrangian projection of $\cK_r$ obtained by locally modding out by the Reeb flow has only arboreal singularities, and a generic front projection of $\cK_r$ to $H_{j_1^r}\cup\cdots \cup H_{j_{m_r}^r}$ is an arboreal hypersurface. We can ensure that the joints $H_{j_1^r}\cup \cdots \cup H_{j_{m_r}^r}$ are a sufficiently generic front projection by a holonomy homotopy supported near $t_i$.
						
			We may also perform holonomy homotopies supported near the points $t_1,\cdots, t_\ell$, to independently isotope the contact neighborhoods of $t_1,\cdots, t_\ell$ so that the strata of the arboreal hypersurfaces in the $T_r$ projection generically intersect the strata of the arboreal hypersurfaces in the $T_{r'}$ projection. Since the union of generically intersecting arboreal hypersurfaces is an arboreal hypersurface, $s$ is an arboreal singularity of the skeleton.
			\end{proof}
			

			\subsection{Arboreal singularities as skeleta} \label{s:model}
			
			Conversely, we can give a model showing that any arboreal singularity associated to a signed rooted tree arises in a Weinstein skeleton. Each vertex of $\mathcal{T}$ corresponds to a distinct Lagrangian bone. The root corresponds to the bone which contains the point on its interior. If the root bone has index $k$, then a vertex at height $\ell$ (distance $\ell$ from the root) corresponds to an index $k+\ell$ bone. If two vertices are connected by a path of upwardly oriented edges then the corresponding bones intersect along a non-empty joint. Here we construct model Weinstein structures exhibiting each arboreal singularity.
			
			\begin{theorem}\label{thm:model}
				Let $\mathcal{T}$ be a signed rooted tree with $n+1$ vertices. There exists a Weinstein structure on $(\C^{n}, \omega_{std})$, $(V,\phi)$ whose skeleton consists of $n+1$ Lagrangian bones meeting at an arboreal $T$ singularity. 
				
			\end{theorem}

			In general, given a rooted tree $\mathcal{T}$ with $n$ vertices, a $2m$ dimensional model for a type $\mathcal{T}$ arboreal skeleton where $n\leq m$ will be given by taking the product of the $2n$ dimensional model with $m-n$ copies of $\C$ with the standard $T^*\R$ Morse-Bott Weinstein structure. 
			
			\begin{proof}
				Let $X^\pm$ be a Liouville vector field on $\R^2$ which is gradient-like for a function $f^\pm$, such that $X^\pm$ has a cancelling pair of critical points $(0,\pm 2)$ of index $0$ and $(0,\pm 1)$ of index $1$ connected by a flow line $\gamma(t)=(0,\pm(2-t))$, and such that $(X^\pm,f^\pm)$ agrees with $(\frac{1}{2}x\partial_x+\frac{1}{2}y\partial_y, x^2+y^2)$ outside a neighborhood of $\gamma$ (see figure \ref{fig:CancPair}). We may arrange that the $\partial_x$ direction is in the unstable manifold of both critical points, and that $\varepsilon x^2 \leq xdx(X^\pm) \leq \frac{1}{\varepsilon} x^2$ in a neighborhood of $\gamma$.
				
				\begin{figure}
					\centering
					\includegraphics[scale=.8]{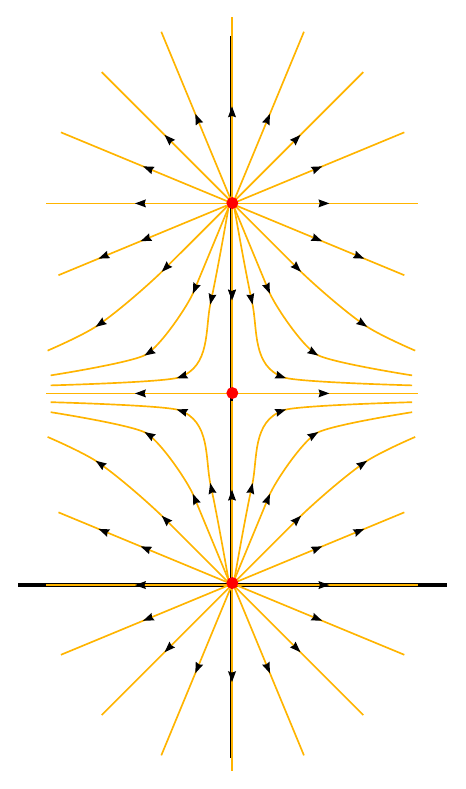} \hspace{1cm} \includegraphics[scale=.8]{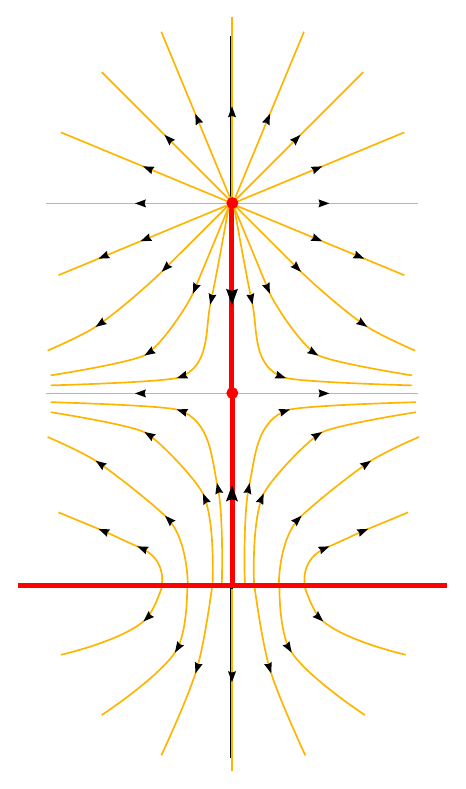}
					\caption{Left: flow of $X^+$. Right: flow of $X^+$ spliced with flow of $Y$. (Reflect over the $x$-axis for the $-$ versions.)}
					\label{fig:CancPair}
				\end{figure}
				
				Let $(Y,g)$ denote the canonical Weinstein structure on $\R^2$ viewed as $T^*\R$: $Y=y\partial_y$, $g(x,y)=y^2$.
				
				Let $\mathcal{T}'$ be signed the rooted forest obtained by deleting the root vertex of $\mathcal{T}$. Label the vertices of $\mathcal{T}'$ $v_1,\cdots, v_n$. For $i=1,\cdots, n$, define the Weinstein structure $\mathcal{W}_i=((\R^2)^n, \sum_i dx_i\wedge dy_i, \mathbf{V}_i, \mathbf{\phi}_i)$ by setting
				$$(\mathbf{V}_i,\Phi_i)=(V_1,\phi_1)\times \cdots (V_n,\phi_n)$$
				where 
				$$(V_j,\phi_j)=\begin{cases}
				(X^{\sigma_{\hat{j},j}},f^{\sigma_{\hat{j},j}}) & \text{ if } v_j\leq v_i\\
				(Y,g) & \text{ otherwise.}
				\end{cases}$$
				Let $(V_0,\phi_0)=(\sum_i y_i\partial_{y_i}, \sum_i y_i^2)$ be the standard cotangent Liouville structure, which we will associate to the root vertex $v_0$ of $\mathcal{T}$.
				
				Let 
				$$L_i = \{(x_1,y_1,\cdots, x_n,y_n) \mid \begin{cases}
				x_j=0, \frac{1}{2}\leq \sigma_{\hat{j},j} y_j\leq 2 &\text{ if } v_j\leq v_i\\
				y_j=0 & \text{ otherwise} 
				\end{cases} \}$$
				and $L_0=\{(x_1,y_1,\cdots, x_n,y_n)\mid y_j=0\; \forall j \}$.Then $L_i$ is a Lagrangian contained in the skeleton of $\mathcal{W}_i$. 
				
				Near each $L_i$ we can splice in the Weinstein structures $\mathcal{W}_i$ into the standard radial Weinstein structure on $\R^{2n}=T^*\R^n$, $(V_{rad}=\sum_i \frac{1}{2}x_i\partial_{x_i}+\frac{1}{2}y_i\partial_{y_i}, \phi_{rad}=\sum_i x_i^2+ y_i^2)$. 
				
				Let $U_i$ be a small regular neighborhood of $\widetilde{L}_i$ where
				$$\widetilde{L}_i = \{(x_1,y_1,\cdots, x_n,y_n) \mid \begin{cases}
				x_j=0, \frac{3}{4}\leq y_j\leq 2 &\text{ if } v_j\leq v_i\\
				y_j=0 & \text{ otherwise} 
				\end{cases} \}$$
				and let $U_0$ be a small regular neighborhood of $L_0=\widetilde{L}_0$. Then for $i\neq j$, by choosing $U_i$ sufficiently small, we may assume $U_i\cap U_j=\emptyset$. Then using these neighborhoods we can splice in the Weinstein structure $\mathcal{W}_i$ into a neighborhood of $\widetilde{L}_i$ contained in $U_i$, such that there are no critical points in $U_i\setminus L_i$. This splicing is achieved by \cite[Lemma 12.10]{CE}, because this set-up satisfies the hypotheses of that lemma, namely that $V_{rad}$ and $\mathbf{V}_i$ are both tangent to $L_i$ and we have estimates
				$$\varepsilon \rho_i \leq V_{rad}\cdot \rho_i, \mathbf{V}_i\cdot \rho_i \leq \varepsilon^{-1}\rho_i.$$
				where $\rho_i:\R^{2n}\to \R$ are the functions
				$$\rho_i(x_1,y_1,\cdots, x_n,y_n)=\sum_{v_j\leq v_i}x_j^2+\sum_{v_j\nleq v_i}y_j^2 $$
				and $\rho_0=\sum_i y_i^2$.
				
				After splicing each $\mathcal{W}_i$ in, we have a Weinstein structure $\mathcal{W}_{\mathcal{T}}$ which agrees with $\mathcal{W}_i$ in a neighborhood of each $\widetilde{L}_i$, and whose critical points are all contained in $L_i$. Therefore the skeleton consists of the union of the stable manifolds of the zeros of $\mathcal{W}_i$ for $i=0,\cdots, n$.
				
				Now we verify that after a holonomy perturbation, the resulting spliced Weinstein structure $\mathcal{W}_{\mathcal{T}}$ has an arboreal $\mathcal{T}$ singularity. Suppose $v_k<v_\ell$ and $v_k$ and $v_\ell$ are connected by an edge. Then $(\mathbf{V}_k,\Phi_k)$ and $(\mathbf{V}_\ell,\Phi_\ell)$ differ only in the $(x_\ell,y_\ell)$ coordinates. In the $(x_\ell,y_\ell)$ coordinate plane, the two spliced pieces meet in an arboreal $A_2$ singularity as in figure \ref{fig:CancPair}. For vertices $v_i<v_j$ that are connected by a chain of edges, there will be a joint of the bone in the $(\mathbf{V}_j,\Phi_j)$ skeleton onto the bone in the $(\mathbf{V}_i,\Phi_i)$ skeleton. However, without any holonomy perturbations, whenever $v_i$ and $v_j$ are separated by more than one edge, there will be a degenerate front projection that needs to be perturbed. 
				
				The simplest example is the linear tree with three vertices, where the root is an extremal vertex. In this case, the three stable manifolds forming the Lagrangian skeleton in $\C^2$ near the origin will be
				$$\Delta_0 = \{(x_1,0,x_2,0)\}$$
				$$\Delta_1= \{(0,y_1,x_2,0) \mid 0\leq y_1 \}$$
				$$\Delta_2 = \{(0,y_1,0,y_2) \mid 0\leq y_1,y_2 \}$$
				The joint between $\Delta_0$ and $\Delta_2$ is degenerate because it is a single point $(0,0)$ (it is the projection to the $(x_1,x_2)$ coordinates of $(0,y_1,0,y_2)$). Note that the joint between $\Delta_0$ and $\Delta_1$ $\{(0,0,x_2,0)\}$ and the joint between $\Delta_1$ and $\Delta_2$ $\{(0,y_1,0,0)\}$ are immersed hypersurfaces, so these do not require perturbation. An explicit holonomy perturbation to make the joint between $\Delta_0$ and $\Delta_2$ generic can be performed along the contact hypersurface $Y_1=\{y_1=\frac{1}{2}\}$. We will give a contact isotopy which fixes $\Delta_1\cap Y_1$, but perturbs $\Delta_2\cap Y_1$ so that it projects to $\Delta_0$ via an immersion. The diffeomorphism
				$$\phi(x_1,\frac{1}{2},x_2,y_2) = (x_1-ty_2^2,\frac{1}{2},x_2+ty_2,y_2)$$
				is a strict contactomorphism:
				$$\phi^*(\frac{1}{2}dx_1+y_2dx_2) = \frac{1}{2}dx_1-ty_2dy_2+y_2dx_2+ty_2dy_2 = \frac{1}{2}dx_1+y_2dx_2.$$
				Furthermore, $\phi$ fixes $\Delta_1\cap Y_1$:
				$$\phi(0,\frac{1}{2},x_2,0) = (0,\frac{1}{2},x_2,0)$$
				and modifies $\Delta_2\cap Y_2$ to
				$$\phi(0,\frac{1}{2},0,y_2) = (-ty_2^2,\frac{1}{2},ty_2,y_2)$$
				which projects to $\Delta_0$ as
				$$(-ty_2^2,\frac{1}{2},ty_2,y_2) \mapsto (-ty_2^2,ty_2)$$
				which is an immersion for any $t>0$. Therefore by incorporating this contactomorphism into the holonomy in a small neighborhood of $\Delta_0$, we end with an arboreal model for the tree with three stacked vertices.
				
				For the general case, we will have many pairs of vertices separated by more than one edge. For each such pair, we will have an associated contact perturbation, generalizing the above example as follows.
				
				Suppose we are building the model for the arboreal $\mathcal{T}$ singularity where $|\mathcal{T}|=n+1$, and we are trying to perturb the joint between $v_i$ and $v_j$ where $v_i<v_j$. Let $v_i<v_{k_1}<\cdots < v_{k_{m-1}} <v_{k_m}=v_j$ be the chain of vertices connecting $v_i$ to $v_j$ in the tree $\mathcal{T}$. Then the contactomorphism $\phi_{ij}: Y_i=\{y_i=c_i\}\to Y_i$ will be defined such that the $(x_\ell,y_\ell)$ components are given as
				$$\phi_{ij}^\ell(x_1,y_1,\cdots, x_n,y_n) = \begin{cases}
																(x_i-\frac{t_{ij}}{2c_i}(y_{k_1}^2+\cdots+y_{k_m}^2),y_i=c_i) & \text{ if } \ell = i\\
																(x_{k_s}+t_{ij}y_{k_s},y_{k_s}) & \text{ if } \ell = k_s\\
																(x_\ell,y_\ell) & \text{ if } \ell \notin\{i,k_1,\cdots, k_{m-1},k_m=j\}
																\end{cases}$$
				
				Observe that since $\Delta_i$ and $\Delta_j$ only differ in the coordinates $(x_\ell,y_\ell)$ for $\ell\in \{i,k_1,\cdots, k_{m-1},k_m=j\}$, the projection of $\Delta_j\cap Y_i$ to $\Delta_i$ will be an immersion since 
				$$\phi(0,c_i,0,y_{k_1},\cdots, 0,y_{k_{m-1}},0,y_{k_m=j}) = (-\frac{t_{ij}}{2c_i}(y_{k_1}^2+\cdots+y_{k_m}^2),c_i,t_{ij}y_{k_1},y_{k_1},\cdots, t_{ij}y_{k_m},y_{k_m})$$
				projects to $\Delta_1$ as the immersion
				$$(-\frac{t_{ij}}{2c_i}(y_{k_1}^2+\cdots+y_{k_m}^2),t_{ij}y_{k_1},\cdots, t_{ij}y_{k_m}).$$
				
				These perturbations should be incorporated into the holonomy in an order such that the highest joints are perturbed first (so that the descending skeleton to the lower bones is actually arboreal), and such that the perturbations of the higher joints are made with sufficiently small coefficients so as to not interfere with the perturbations of the lower joints. (In fact, based on explicit calculations for somewhat larger examples than the one above, it seems that the signs work out such that the higher perturbations actually only help the lower perturbations to be immersions, but for brevity of proof, we will utilize the fact that our perturbations can be made arbitrarily small since any $t_{ij}>0$ makes the above projection from $\Delta_j\cap Y_i$ to $\Delta_i$ an immersion.) Therefore the coefficients $t_{ij}$ should be chosen relatively small compared to $t_{i'j}$ for $i'<i$. Then because the contactomorphism associated to the pair $(i,j)$ is relatively small, its associated holonomy perturbation will keep $\Delta_j$ relatively $C^1$ close to the original (degenerately projecting) $\Delta_j$, so since the perturbation $\phi_{i'j}$ causes the original degenerately projecting $\Delta_j$ to project with no tangencies to $\Delta_{i'}$, it will also cause the $\phi_{ij}$ holonomy perturbed version of $\Delta_j$ to project with no tangencies to $\Delta_{i'}$.

			\end{proof}

\section{Singularities of tangency} \label{s:tangential}
	Each bone $\Delta$ contains a singular hypersurface worth of joints, such that given a neighborhood of $\Delta$ in $W$, the joints are the front projections of (singular) Legendrians in the positive contact type part of the boundary of the neighborhood. The front projection itself is specified by a Legendrian foliation of this contact boundary of the neighborhood, or equivalently a Lagrangian foliation of the neighborhood itself (defined away from the boundary and non-compact ends of $\Delta$).
	
	A front projection of a Legendrian in the unit co-tangent bundle may develop singularities, whenever the front projection fails to be an immersion. Equivalently, this occurs when the tangent space to the Legendrian nontrivially intersects the tangent space to a leaf of the foliation by cotangent spheres. The singularities that develop in the skeleton along a joint at a tangential singularity, fall outside the range of arboreal singularities and in sufficiently high dimensions realize infinitely many different singularity types. Therefore we would like to eliminate these tangential singularities from occurring in our skeleton.
	
	The Weinstein function $\phi$ takes a constant value $\phi(Z)$ on the marrow $Z$ of a bone $\Delta$, which we will call the \emph{$\phi$-value of $\Delta$}. We will address singularities of tangency of such front projections one bone at a time, starting with the joints lying in the highest $\phi$ valued bones, and ending with the joints lying in the lowest $\phi$ valued bones. If two bones have the same value $\phi(Z)$ we can deal with them in either order independently, or we can assume by genericity that each bone has a distinct value of $\phi(Z)$. Because the non-empty joints in the highest $\phi$-valued bones are necessarily the front projections of smooth Legendrians (possibly with boundary), performing the arborealization from the top to the bottom allows us to assume that the singularities of the Legendrian we are front projecting are arboreal. (To check that a Legendrian has arboreal singularities using the Lagrangian definitions of arboreal singularities, verify that the Lagrangian projection obtained by locally quotienting by the Reeb flow has only arboreal singularities.)
	
	Arborealizing singularities of tangency in the joints comes in two parts. As we go through the arborealizing procedure, we will take care not to introduce new singularities of tangency in higher $\phi$-valued bones, so the process terminates when we get to the lowest $\phi$-valued bone. To accomplish this, in the first part, we localize the flow around the singularities of tangency occurring at the joints on $\Delta$. The localization stage will increase the number of bones, by breaking up the existing bones into multiple bones. At first the skeleton will stay set-wise constant, but the stratification will change. Then, to keep all bones Lagrangian so that after a generic perturbation the only non-arboreal singularities come from singularities of tangency, the skeleton will grow some fins. 
	
	In the second part, we will remove the simplest type of singularities of tangency in these confined regions by breaking up the Lagrangian bones into more pieces which meet arboreally. The key idea is to allow the tangent bundle to the Legendrian to have discontinuities at arboreal singularities to skip tangencies to the foliation.

	\subsection{Localizing tangential singularities} \label{s:localize}
	
		Here we give the procedure to localize the Liouville flow into a neighborhood of a joint. As mentioned above, we perform this procedure as well as the procedure to eliminate $\Sigma^{1,0}$ singularities of tangency one bone at a time, starting with joints lying in the highest $\phi$ valued bones and ending with joints lying in the lowest $\phi$ valued bones. Fixing a bone $\Delta$ containing a non-empty joint, we modify the Weinstein structure on a neighborhood of $\Delta$ first using the following proposition.
	
		\begin{prop}\label{p:localize}
			Let $\Delta^n$ be an index $k$ Lagrangian bone with marrow $Z^{n-k}$ for $(W,\omega_0,V_0,\phi_0)$. Then there exist closed neighborhoods $\nu\Supset\nu_0 \Supset \nu_1 \Supset \widetilde{\nu} \supset Z$ so that the only zeros of $V_0$ in $\nu$ are in $Z$ and a Weinstein homotopic structure $(\omega_1,V_1,\phi_1)$ on $W$ such that
			\begin{itemize} 
				\item $Skel(W,\omega_1,V_1,\phi_1) \cap \nu_0$ is invariant under the positive flow of $V_1$,
				\item $\unstab(Z)\cap Skel(W,\omega_0,V_0,\phi_0)\cap \partial \nu_1$ is invariant under the positive flow of $V_1$ and attracts under this positive flow $Skel(W,\omega_1,V_1,\phi_1)\cap \nu_0\setminus (\partial \nu_0\cup \Delta)$,
				\item $Skel(W,\omega_1,V_1,\phi_1)$ agrees with $Skel(W,\omega_0,V_0,\phi_0)$ outside of $\nu$ and inside of $\widetilde{\nu}$,
				\item $Skel(W,\omega_1,V_1,\phi_1)$ and $Skel(W,\omega_0,V_0,\phi_0)$ are ambiently isotopic and thus have the same singularities.

			\end{itemize}
		\end{prop}
		
		\begin{proof}
			Choosing a sufficiently small neighborhood $\nu$ of $Z$, we can choose coordinates which identify it with a neighborhood of the zero section in $T^*Z\times \C^k$ such that for each $z\in Z$, $\stab(z) = \{z\}\times \R^k$ where $\R^k$ has coordinates $(x_1,\cdots, x_k)$. 
			
			Consider $\Lc:=\unstab_{V_0}(Z)\cap Skel(W,\omega_0,V_0,\phi_0)\cap \nu$, a potentially singular isotropic of dimension $n-k$. The unstable manifold $\unstab_{V_0}(Z)$ itself is co-isotropic, but contains a slice identified with $T^*Z\times\{0\}$ isomorphic to its symplectic reduction. Moreover, $\Lc$ is contained in this slice because it is part of the stable manifold of some other zero set which intersects $\nu$ as $\Lc\times \R^k_{(x_1,\cdots, x_k)}$. Since the stable manifold is isotropic and contains $\langle \partial_{x_1},\cdots, \partial_{x_k}\rangle$ in its tangent space, it must be contained in the slice $\{y_1=\cdots=y_k=0\}$. In $T^*Z\times \{0\}$, $\Lc$ is the co-oriented co-normal to a hypersurface $\cJ\subset Z$. The joints in $\Delta=Z\times D^k$ are given by $\cJ\times D^k$, and $Skel(W,\omega_0,V_0,\phi_0)\cap \nu$ is $Z\times D^k \cup \Lc\times D^k$. Along $\partial \nu$, $V_0$ points inward along the part of the skeleton identified with $(\Lc\cup Z)\times S^{k-1}$ and outward along $\partial \Lc \times D_{\varepsilon}^k$. Because $V_0$ is tangent to $\Lc$ and non-vanishing on $\Lc\setminus \cJ$, integrating its flow allows us to identify $\Lc\setminus \cJ$ with $(0,1)\times \cK$ where $\cK$ is a potentially singular contact isotropic submanifold of the positive contact part of $\partial \nu$. Let $s$ denote the coordinate on $(0,1)$. It will be useful to consider $\cK$ as a Legendrian in the co-sphere bundle $S^*Z\times \{0\}$. Because of the ordering we consider the bones $\Delta$, we may assume that if $\cK$ has singularities, they are arboreal.
			
			$\cK$ decomposes into smooth Legendrian strata $\cK_j$ based on which bone $\Delta_j$ contains that stratum. Each such bone $\Delta_j$ has a non-empty joint on $\Delta$ given as $\cJ_j\times D^k\subset Z\times \R^k$. Since $\Delta_j$ is locally the co-normal to its joint, $\Delta_j\subset T^*Z\times \R^k$.
			
			Choose a Morse-Bott* function $f_j: \cK_j \to \R$ which has connected critical locus a manifold (possibly with boundary) $C\subset \cK_j$ such that $\stab_{\nabla f_j}C=\cK_j$. We will modify the Weinstein structure to create canceling pairs of zeros along $\{\frac{1}{3}\}\times C$ and $\{\frac{2}{3}\}\times C$ whose local stable manifolds are $(0,\frac{2}{3})_s\times\stab_{\nabla f_j}\times D^k_{(x_1,\cdots, x_k)}$ and $\{\frac{2}{3}\}_s\times\stab_{\nabla f_j}\times D^k_{(x_1,\cdots, x_k)}$ respectively.
			
			
%

			The values of $\phi_0$ on $\Lc$ are bounded between the $\phi_0$ value of $\Delta$ and the $\phi_0$ value of $\Delta_j$. Since $V_0$ is non-vanishing along $\{\frac{1}{2}\}\times \cK_j\times\{0\}$, we can homotope $\phi_0$ through gradient-like functions for $V_0$ which are unchanged outside of a neighborhood of $\Lc$ so that we can assume $\phi_0$ takes a constant value $c$ on $\{\frac{1}{2}\}\times \cK_j\times\{0\}$ and in a neighborhood of $\cK_j\times\{0\}$ on $\Delta_j$ is given by $\phi_0(s,k,\vec{x})=c+s-\sum_i x_i^2$. 
			
			Now choose a birth family of functions $\eta_t:(0,1)\to \R$ where $\eta_0(s)=s+c-\frac{1}{2}$, $\eta_{\frac{1}{2}}$ has an embryonic point at $\frac{1}{2}$ and $\eta_1$ has a local maximum at $\frac{1}{3}$ and a local minimum at $\frac{2}{3}$. Let $\bar{\eta}_t(s,k,\vec{x})=\eta_t(s)+f_j(k)-\sum_i x_i^2$. Let $K\subset \cK_j$ be a compact subset which contains all of $\cK_j$ except a small neighborhood of its joints on other bones. Let $\rho:W\to \R$ be a bump function which is identically $1$ on a small neighborhood of $[\frac{1}{3},\frac{2}{3}]\times K\times \{0\}$ and vanishes outside a slightly larger neighborhood. Let $\delta(t)$ be a smooth function which is identically $0$ near $t=0$ and identically $1$ near $t=1$. Then define
				$$\phi_t = (1-\delta(t)\rho)\phi_0+\delta(t)\rho\bar{\eta}_t$$
			Since $V_0$ is non-vanishing on $\Delta_j\cap \nu$ and gradient-like for $\phi_0$, there exists a metric $g$ such that with respect to this metric, $V_0=\nabla_g \phi_0$ in the neighborhood of $[\frac{1}{3},\frac{2}{3}]\times \cK_j\times \{0\}$. Now let $Y_t$ be a vector field on $\Delta_j$ given in this neighborhood $\nabla_g(\phi_t|_{\Delta_j})$ and extended by $V_0$ outside. Then apply Proposition \ref{p:isotropichtpy} to this family. Letting $\nu_0 = \{0\leq s\leq \frac{2}{3}\}$, $\nu_1 = \{0\leq s\leq \frac{1}{3} \}$, and $\widetilde{\nu}\subset \nu_0$ disjoint from the neighborhood of modification, the resulting Weinstein structure has the first three properties listed in the proposition.
			
			Because $\cK$ has only arboreal singularities, as we modify the Weinstein structure on each $\Delta_j$, we can ensure that $V_t$ remains tangent to the original skeleton in a small neighborhood $\widetilde{U}$ of the bone we are modifying by the last point in Proposition \ref{p:isotropichtpy}. However, there may be a transitional region where changes in $V_t$ yield changes in the skeleton. This transitional region is a trivial Weinstein cobordism which may have non-trivial holonomy. The skeleton is changed because $Skel(W,\omega, V,\phi)\cap \partial \nu$ may flow to a different (but contact isotopic) Legendrian under this non-trivial holonomy which then could have a qualitatively different front projection onto $\Delta_j$ as it flows to the joints. To prevent these qualitative differences in the singularity structure of the skeleton, we use a simple observation we will refer to as the \emph{reverse holonomy trick}. 
			
			Using the neighborhoods $\widetilde{U}\subset U$ of $\Delta_j$ provided by Proposition \ref{p:isotropichtpy}, choose a slightly larger regular neighborhood $\hat{U}\supset U$ such that $V_t$ is non-vanishing in $\hat{U}\setminus U$. Let $\Gamma_1$ be the holonomy of the trivial Weinstein cobordism using the structure $(\omega_1,V_1,\phi_1)$ from the convex contact boundary of $U$ to the convex contact boundary $\widetilde{U}$. Let $\Gamma_0$ be the corresponding holonomy on this cobordism using the structure $(\omega_0,V_0,\phi_0)$. Let $\Gamma_2$ be the holonomy of the trivial Weinstein cobordism from the convex contact boundary of $\hat{U}$ to the convex contact boundary of $U$ (which is the same for the structures $(\omega_t,V_t,\phi_t)$ since the Weinstein homotopy is supported in $U$). (We follow the convention of \cite{CE} that the holonomy is the contactomorphism \emph{from} the positive boundary \emph{to} the negative boundary obtained by following the negative flow of $V_1$). Now homotope the Weinstein structure $(\omega_1,V_1,\phi_1)$ in $\hat{U}\setminus U$ to change its holonomy from $\Gamma_2$ to $\Gamma_1^{-1}\circ\Gamma_0\circ\Gamma_2$. Then the total holonomy from $\partial \hat{U}$ to $\partial \widetilde{U}$ is $\Gamma_1\circ \Gamma_1^{-1}\circ\Gamma_0\circ\Gamma_2=\Gamma_0\circ \Gamma_2$. Rename the modified structure $(\omega_1,V_1,\phi_1)$. Then $Skel(W,\omega,V,\phi)$ agrees with $Skel(W,\omega_1,V_1,\phi_1)$ in $\widetilde{U}$, and in particular the joints of $Skel(W,\omega_1,V_1,\phi_1)$ on $\Delta_j$ are unchanged. 
			
			The skeleton in the transitional region $\hat{U}\setminus \widetilde{U}$ may differ from the original skeleton by a Lagrangian isotopy. This isotopy initially may affect the singularity structure of the skeleton near the joints of $\Delta_j$ on other bones including $\Delta$. To avoid such changes, we use the reverse holonomy trick again now in neighborhoods other bones which $\Delta_j$ has joints on going in $\phi$-value decreasing order up through the joints on $\Delta$. 
		\end{proof}
		
		After applying this procedure to isolate the Liouville flow into small neighborhoods of the joints, the stable manifolds of components of zeros of the Liouville vector field are no longer all Lagrangian bones. The Legendrian $\cK_0$ is invariant under the flow of $V_1$ and is made up of a collection of $(n-1)$ dimensional stable manifolds of connected components of zeros. Applying Proposition \ref{p:thicken}, we can thicken each of these to $n$ dimensional Lagrangian bones where the thickening of $Z_0$ to $Z_1$ is in the Reeb direction, while preserving the other bones in the skeleton by choosing the unstable manifolds of the zeros in $Z_1$ to contain the span of $V_0$ in their tangent spaces. We call the resulting collection of bones lying in $\partial \nu_0$, the \emph{Reeb ribbon}.
		
		After this thickening, there are two sets of bones with joints given by the core $\cK_0\times\{0\}$ on the Reeb ribbon, but with opposite co-orientations. That the joints coincide is not generic. We can incorporate a Reeb flow contact isotopy into the holonomy on one side to make these two sets of joints disjoint. When the Legendrian $\cK_0$ is smooth, this suffices to make the joints generic and the singularities on the thickening of $\cK_0$ arboreal $A_2$ singularities. However, when $\cK_0$ has singularities, the joints require further contact perturbations to be generic. This is because the skeleton in a neighborhood of the Reeb ribbon is a product of \includegraphics[scale=.15]{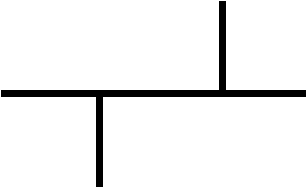} with $\cK_0$. Even if $\cK_0$ has only arboreal singularities, the product of arboreal singularities is not arboreal (see figure \ref{fig:A2prod}). 
		
		\begin{figure}
			\centering
			\includegraphics[scale=.4]{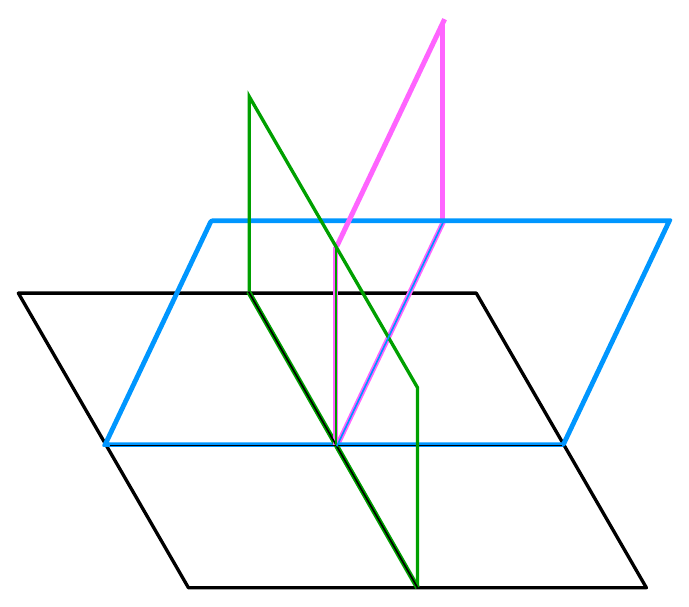} \hspace{1cm} \includegraphics[scale=.4]{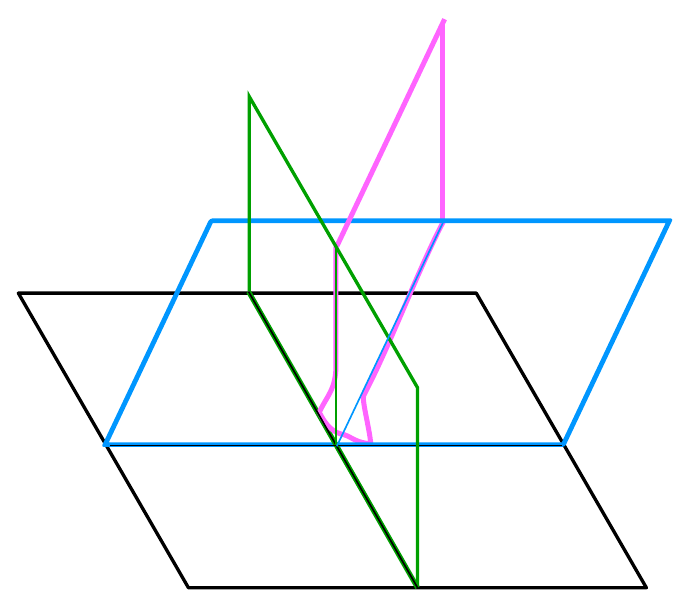}
			\caption{The product of an $A_2$ singularity with itself has four Lagrangian strata which all intersect at a point, and thus is not arboreal (left). However, a holonomy perturbation adjusts it to a collection of arboreal singularities (right). This is the simplest case of a more general resolution procedure of Nadler. Note the figure represents a skeleton embedded in $\R^4$ so interpret with care.}
			\label{fig:A2prod}
		\end{figure}
		
		The problem is that the product of two singularities has too much symmetry, and is not stable under perturbations of the holonomy. Nadler has an explicit resolution of a product of arboreal singularities into a collection of arboreal singularities. For our purposes, we know by Theorem \ref{thm:notang}, that it suffices to use holonomy perturbations to make all joints immersed. We can use the perturbations described in the proof of theorem \ref{thm:model} to make the joints immersed between bones which were not in generic position due to the product structure. Many of the joints are already immersed because at the end of the Weinstein homotopy of Proposition \ref{p:localize}, when the Reeb ribbon is an $(n-1)$ dimensional isotropic submanifold, the projection of $\cK_1$ to $\cK_0$ is an embedding because the skeleton in this region is Lagrangian isotopic to the product $\cK_1\times I$ where the $I$ direction is given by the flow of $V_0$. After thickening the Reeb ribbon, the unstable manifolds which are projected out in the foliation become smaller dimensional, so the rank of the front projection remains at least as large as it was before thickening. The non-immersed joints come from higher co-dimension strata in $\cK_0$ which can become joints between some of the bones which have been producted with \textbar \, onto some of the bones which have been producted with \textemdash. By choosing our holonomy perturbations of the product singularities to be sufficiently small, they will not create new singularities of tangency.

	\subsection{Arborealizing $\Sigma^{1,0}$ tangential singularities} \label{s:arborealize}
	
	Finally, we prove our strongest result by performing the final step in arborealization for the first class of tangential singularities. The mechanism to eliminate these singularities of tangency is similar to Nadler's real blow-up of \cite{N2}. The main difference is that we spread out these singularities at the level of the Legendrian instead of at the level of the front projection. This allows us to control the $C^1$ size of the modification which in turn allows us to avoid introducing additional singularities on the Reeb ribbon which was produced in the localization stage. An example indicating how the proof works is shown in figure \ref{fig:arborealization}.
	
	\begin{figure}
		\centering
		\includegraphics[scale=.5]{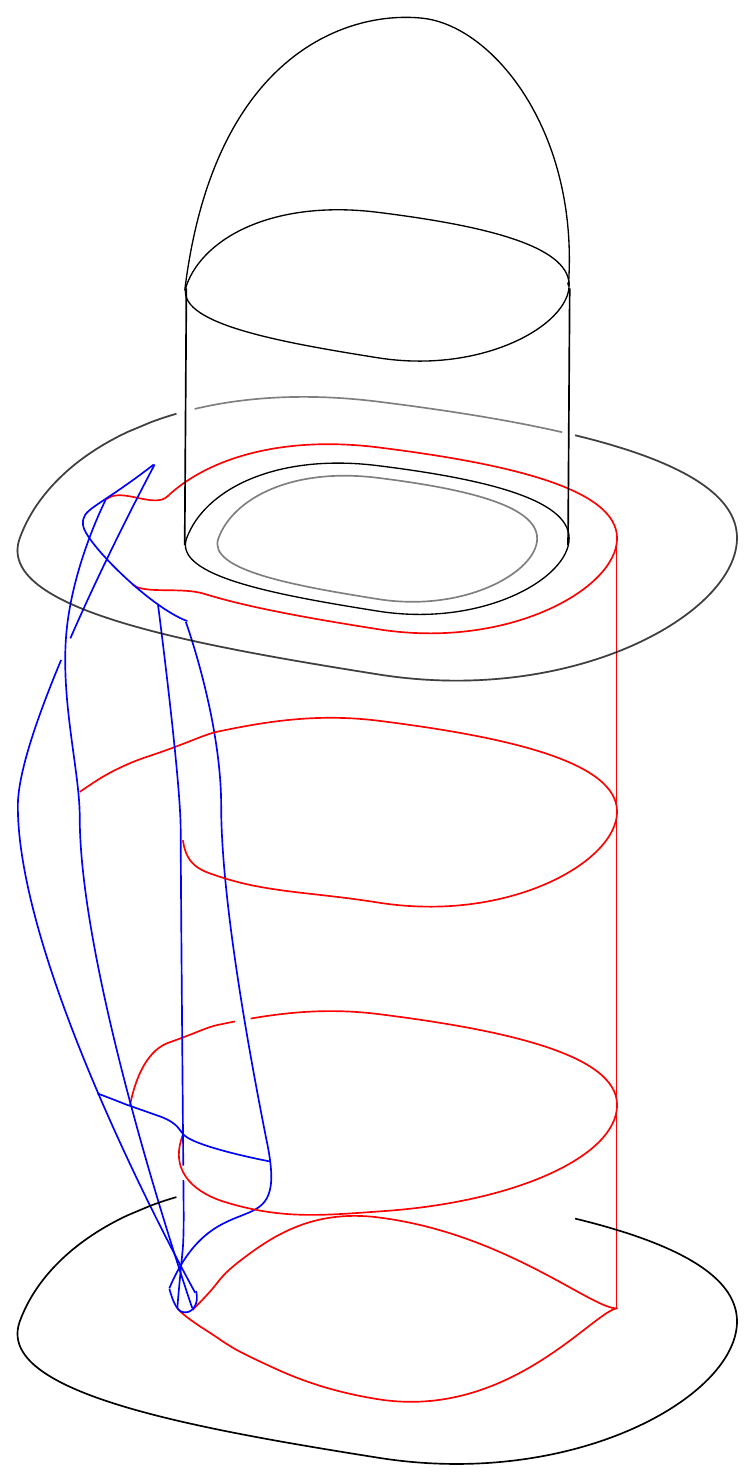}
		\caption{An example of localization, introduction of the Reeb ribbon, and arborealization of one $\Sigma^{1,0}$ cusp singularity of tangency.}
		\label{fig:arborealization}
	\end{figure}
	
	A bone $\Delta_j$ has a non-empty joint $\cH$ on $\Delta$ if and only if $\Lc:=\stab(\Delta_j)\cap \unstab(\Delta)\neq \emptyset$ where $\Lc$ is identified with the co-normal to $\cJ:=\cH\cap Z$ in $T^*Z$. We say that the joint $\cH$ has a \emph{tangential singularity} if the corresponding front projection $\Lc\to Z$ has a singularity of tangency. These singularities of tangency can be organized and characterized via the Thom-Boardman stratification as described in section \ref{s:front}. If all singularities of tangency have Thom-Boardman type $\Sigma^{1,0}$ we can explicitly eliminate these tangential singularities.
	
	\begin{theorem1}[\textbf{\ref{thm:sigma10}}]
		Suppose all of the tangential singularities of joints have type $\Sigma^{1,0}$. Then there is a homotopy of the Weinstein structure to one whose skeleton has only arboreal singularities.
	\end{theorem1}
	
	\begin{proof}
		Suppose $\Delta_j$ has a joint on $\Delta$ with $\Sigma^{1,0}$ singularities. In other words, if $Z$ is the marrow of $\Delta$ and $\cK_j=\Delta_j\cap \unstab(Z)\cap \partial \nu_1 \subset S^*Z$, the front projection $\cK_j\subset S^*Z \to Z$ has $\Sigma^{1,0}$ singularities. We will assume that we have localized as in section \ref{s:localize} and use the notation and coordinates of that section identifying $\cK_j$ with $\{\frac{1}{3}\}\times \cK_j\times\{0\}\subset \partial \nu_1\cap \unstab(Z)$, noting that this isotropic is invariant under the flow of $V_1$.  Recall that $V_0$ was a Liouville vector field outwardly transverse to $\nu_1$ in a neighborhood of $\cK$ and thus induces a contact form. Near $\cK_j$, we will use the non-vanishing vector field $V_0$ to integrate to a coordinate $s$ so that $V_0=\partial_s$ near $\cK_j$ and $\partial \nu_1 = \{s=\frac{1}{3}\}$. Also note that the skeleton with respect to $V_1$ is also invariant in the $s$ direction near $\cK_j$, and this $s$ direction corresponds to a radial direction in the co-tangent fibers $T^*Z$. We will work entirely in the $T^*Z$ slice where $\vec{x}=\vec{y}=0$ where $\vec{x}$ are the stable coordinates in $\Delta$ transverse to $Z$ and $\vec{y}$ are unstable coordinates integrating the kernel of the symplectic form on $\unstab(Z)$. The full joint on $\Delta$ is simply given by the product with $\R^k_{(x_1,\cdots, x_k)}$ of the intersection of the joint with $Z$, so it suffices to eliminate singularities of tangency in $Z$.
		
		Because of localization, there are two front projections of $\cK_j$ appearing as joints in the skeleton: one to $Z$ whose tangential singularities we are trying to resolve, and one to the Reeb ribbon which is an embedding to the core of the Reeb ribbon with no singularities of tangency. Let $\cF$ denote the Legendrian foliation corresponding to the front projection to $Z$. Let $\cG$ denote the Legendrian foliation corresponding to the front projection to the Reeb ribbon. If the singular locus of the front projection of $\cK$ with respect to $\cF$ has only type $\Sigma^{1,0}$ then there is a smooth hypersurface $\Sigma\subset \cK_j$ (possibly with boundary in $\partial \cK_j$) where the intersection between the tangent space to the skeleton and the tangent space to the leaves of $\cF$ is a 1-dimensional line field which is tangent to the skeleton, but transverse to $\Sigma$.
		
		Let $d$ denote the dimension of $\cK_j$ ($d=n-k-1$). Because $\Sigma^{1,0}$ singularities have a standard model, we have local coordinates $(q_1,\cdots, q_{d-1},u)$ on $\cK_j$ near each point in $\Sigma$ and local coordinates $(z_1,\cdots, z_{d+1})$ near its image in $Z$ such that the front projection $\pi_{\cF}|_{\cK_j}: \cK_j\to Z$ is given by $\pi_{\cF}|_{\cK_j}(q_1,\cdots, q_{d-1},u) = (q_1,\cdots, q_{d-1},u^2,u^3)$. In these coordinates $\Sigma = \{u=0\}$, $\ker(d\pi_{\cF})$ is spanned by $\partial_u$ along $\Sigma$, and $(q_1,\cdots, q_{d-1})$ give local coordinates on $\Sigma$. Let $v:\partial \nu_1 \to \R$ be the coordinate defined by $v=\pi_{z_d}\circ \pi_{\cF}$. Then near $\Sigma$, $\cK_j\subset \{v=u^2\}$ and the leaves of $\cF$ are contained in the level sets of $v$. Notice that $\partial_v$ is transverse to $T\cK + T\cF$ along $\Sigma$. Since $\partial_u\in T\cK\cap T\cF$ and $\cK$ and $\cF$ are Legendrian, $\partial_v$ must pair symplectically with $\partial_u$, thus $(u,v)$ splits off a symplectic factor normal to $\Sigma$. Since $\Sigma$ is isotropic and its symplectic normal bundle in $T^*Z$ is trivialized by the Lagrangian frame $(\partial_s, \partial_u)$ it has a neighborhood in $T^*Z$ symplectically modeled on $T^*\Sigma\times \C_{(u,v)} \times \C_{(s,r)}$. Along $\Sigma$, the leaves of $\cF$ are isotropic and transverse to $\Sigma\times \C_{(u,v)}\times \C_{(s,r)}$. Therefore we can choose co-dimension $1$ slices of these leaves along $\Sigma$ symplectically orthogonal to $\Sigma\times \C_{(u,v)}\times \C_{(s,r)}$, and use this to  identify coordinates on the neighborhood of $\Sigma$ so that these leaves coincide with the cotangent fibers in $T^*\Sigma$ in the splitting $T^*\Sigma\times \C_{(u,v)}\times \C_{(s,r)}$. Denote the corresponding coordinates on $T^*\Sigma$ dual to $(q_1,\cdots, q_{d-1})$ by $(p_1,\cdots, p_{d-1})$.
		
		Choose compact subsets $K\subset K'\subset \cK_j$ containing all but a small neighborhood of the joints of $\cK_j$ on other bones. Choose a Morse-Bott* function $\psi: \cK_j\to \R$ such that with respect to a fixed metric, $\nabla \psi$ agrees with $V_1$ outside of $K'$, $\Sigma$ agrees with a union of stable manifolds of critical points of $\psi$ inside $K$, and the same union of stable manifolds of critical points of $\psi$ is $C^1$ close to $\Sigma$ in all of $\cK_j$. If $\Sigma$ has no boundary, we can assume that there is a unique connected component of critical points of $\psi$ whose stable manifold is $\Sigma$. If $\Sigma$ has boundary, then we may need a second connected component of critical points of $\psi$ so that the union of the stable manifolds includes $\partial \Sigma$. Denote the points making up this critical set by $\Lambda$ so $\stab(\Lambda)$ agrees with $\Sigma$ in $K$. Use Proposition \ref{p:isotropichtpy} and the reverse holonomy trick as in the previous section to create a Weinstein homotopy to $(W,\omega_2,V_2,\phi_2)$ preserving the skeleton up to Lagrangian isotopy so that $V_2$ restricted to $\cK_j$ agrees with a scalar multiple of the gradient of $\psi$. 
		
		Now, the critical points of $\psi$ of less than maximal index will have stable manifolds which are isotropic but not Lagrangian, so we can thicken them. We will take particular care in thickening the stable manifolds of $\Lambda$ to eliminate the $\Sigma^{1,0}$ tangential singularities. Choose $\varepsilon>0$ sufficiently small so that the coordinate model above is defined for $|u|<3\varepsilon$.
		
		Consider the $C^1$ functions
		$$\tilde{h}_+(u) = \begin{cases}  4\varepsilon u-4\varepsilon^2 & u\leq 2\varepsilon\\ u^2 & u\geq 2\varepsilon\end{cases}$$
		$$\tilde{h}_-(u) = \begin{cases}  -4\varepsilon u-4\varepsilon^2 & u\geq -2\varepsilon\\ u^2 & u\leq -2\varepsilon\end{cases}$$
		Let $h_\pm$ be a $C^1$ close smoothing of $\tilde{h}_\pm$. Note that the graph $v=h_+(u)$ intersects $v=\varepsilon u$ at the point $u=\frac{4}{3}\varepsilon$, $v=\frac{4}{3}\varepsilon^2$ and the graph $v=h_-(u)$ intersects $v=\varepsilon u$ at the point $u=-\frac{4}{5}\varepsilon$, $v=-\frac{4}{5}\varepsilon^2$. Also observe that choosing $\varepsilon$ sufficiently small makes $h_+(u)$ $C^1$ close to $u^2$ for $u\geq \frac{4}{3}\varepsilon$ and $h_{-}(u)$ $C^1$ close to $u^2$ for $u\leq -\frac{4}{5}\varepsilon$.
		
		Thicken the zero set $\Lambda$ in a neighborhood with $|u|<3\varepsilon$ and $|v|<4\varepsilon^2$ to a thickened zero set $\overline{\Lambda}$ so that locally $\stab(\overline{\Lambda}) = \{\vec{p}=0, r=s=0, v=\varepsilon u, |u|\leq 2\varepsilon$. Arrange the transverse Lagrangian foliation such that for points $(\vec{q}_0,u=\frac{4}{3}\varepsilon)\in \overline{\Lambda}$ the unstable manifold of that point is $\mathcal{U}_{\vec{q}_0,+}:=\{v=h_+(u), \vec{q}=\vec{q}_0, r=0\}$ and for points $(\vec{q}_0, u=-\frac{4}{5}\varepsilon)\in \overline{\Lambda}$ the unstable manifold of that point is $\mathcal{U}_{\vec{q}_0,-}:=\{v=h_-(u), \vec{q}=\vec{q}_0,r=0\}$. These unstable manifolds agree near the boundary of the neighborhood with the skeleton of the Weinstein structure before thickening $\Lambda$. The thickening (Proposition \ref{p:thicken}) uses a cut-off function to splice in the thickened Weinstein structure. We can choose this cut-off function $\rho$ such that wherever the skeleton intersects $\{\rho'\neq0\}$ in $T^*Z$, $\rho$ depends only on $v$. The spliced Liouville vector field is given by $(1-\rho)V_1+\rho V_{loc}+HX_\rho$ where $H$ is a Hamiltonian function such that $V_1\setminus V_{loc}=X_{H}$. If $\rho$ depends only on $v$ in a neighborhood in $T^*Z$ of the skeleton then $X_\rho$ is a multiple of $\partial_u$ which is tangent to the leaves of $\cF$. We can apply the reverse holonomy trick in this transitional region to undo any changes to the $u$-coordinates of the skeleton as it enters the neighborhood of $\overline\Lambda$ without changing the invariance of the skeleton in the $q_i$ directions. In a neighborhood of $\overline{\Lambda}$, the skeleton agrees with the union of $\stab(\overline\Lambda)$ with $\cup \mathcal{U}_{\vec{q_0},+}\cup\mathcal{U}_{\vec{q_0},-}$. See the $(u,v)$ slice of this skeleton in figure \ref{fig:nosigma10}. Holonomy changes outside a neighborhood of $T^*Z$ effected by the cut-off will only effect lower $\phi$-valued joints and thus will be dealt with at a later stage. Let $(V_2,\phi_2)$ denote the resulting Weinstein structure.
		
		\begin{figure}
			\centering
			\includegraphics[scale=.7]{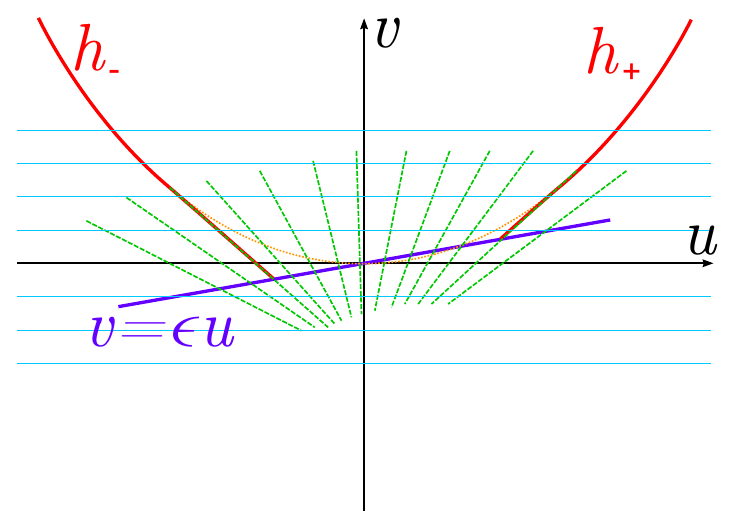} \hspace{1cm} \includegraphics[scale=.7]{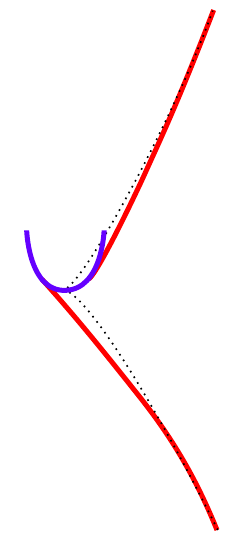}
			\caption{Eliminating $\Sigma^{1,0}$ tangencies. Left: the Lagrangian projection in the $(u,v)$ coordinates, where horizontal lines show the leaves of the foliation $\cF$. Right: the corresponding slice of the front projection, where the black dotted curve indicates the original cusp curve. The vertical coordinate here is obtained by integrating $u$ as a function of $v$ and the horizontal coordinate is $v$.}
			\label{fig:nosigma10}
		\end{figure}
		
		Now we verify the resulting front projections $\cF$ and $\cG$ have no remaining singularities of tangency. Note that outside of the neighborhood where our coordinates are defined, there are no singularities of tangency. For the bone $\stab(\overline\Lambda)$, the tangent space in $TS^*Z$ is spanned by $(\varepsilon \partial_v+\partial_u, \partial_{q_i})$, whereas the leaves of $\cF$ have tangent spaces spanned by $(\partial_u, \partial_{p_i})$, so there is no non-trivial intersection of the tangent spaces. Since $\Sigma$ agrees in $C$ with $\stab(\Lambda)$, the other pieces of the skeleton in this neighborhood agree near $\Sigma$ with the sets 
		$$\{v=h_+(u), \vec{p}=0, r=0, u\geq \frac{4}{3}\varepsilon \} \text{ and } \{v=h_-(u), \vec{p}=0, r=0, u\geq -\frac{4}{5}\varepsilon \}$$ 
		and the tangent space to these strata are spanned by $(\partial_u+h'_\pm(u)\partial_v, \partial_{q_i})$. Since $h'_\pm(u)\neq 0$, these tangent spaces intersect trivially with $T\cF$. In the transitional holonomy region, $V_2$ is a linear combination of two Liouville vector fields which preserve $\cK_j$ plus some function multiple of $\partial_u$. Since $T_p\cK_j\cap T_p\cF_p=\{0\}$ in this region which lies away from $\Sigma$ and $\partial_u\in T_p\cF_p$, the skeleton of $(W,\omega,V_2,\phi_2)$ has no singularities of tangency with respect to $\cF$.  
		
		Regarding the front projection with respect to the foliation $\cG$ to the Reeb ribbon, we know that before the thickening of $\Lambda$, $\cK_j$ had no singularities of tangency with $\cG$. We can choose $\varepsilon$ sufficiently small so that the skeleton after thickening is $C^1$ close to the skeleton before thickening, therefore no singularities of tangency with respect to $\cG$ are introduced. 
		
		Similarly, by performing localization and elimination of singularities of tangency starting from the highest $\phi$-valued joints to the lowest $\phi$-valued joints, we ensure that the joints of $\cK_j$ onto other bones have no singularities of tangency (these would have been eliminated at an earlier iteration). Therefore, because the modified skeleton remains $C^1$ close to the original, we can arrange that these joints in the horizontal direction (within $\partial \nu_1\cap \unstab(Z)$) do not acquire any new singularities of tangency.
	\end{proof}

	\bibliography{references}

\begin{thebibliography}{Nad17}

\bibitem[AB95]{AB}
D.~M. Austin and P.~J. Braam.
\newblock Morse-{B}ott theory and equivariant cohomology.
\newblock In {\em The {F}loer memorial volume}, volume 133 of {\em Progr.
  Math.}, pages 123--183. Birkh\"auser, Basel, 1995.

\bibitem[AG]{AG}
Daniel \'{A}lvarez Gavela.
\newblock The simplification of singularities of {L}agrangian and {L}egendrian
  fronts. arxiv:1605.07259[math.sg].

\bibitem[CE12]{CE}
Kai Cieliebak and Yakov Eliashberg.
\newblock {\em From {S}tein to {W}einstein and back}, volume~59 of {\em
  American Mathematical Society Colloquium Publications}.
\newblock American Mathematical Society, Providence, RI, 2012.
\newblock Symplectic geometry of affine complex manifolds.

\bibitem[EG91]{EG}
Yakov Eliashberg and Mikhael Gromov.
\newblock Convex symplectic manifolds.
\newblock In {\em Several complex variables and complex geometry, {P}art 2
  ({S}anta {C}ruz, {CA}, 1989)}, volume~52 of {\em Proc. Sympos. Pure Math.},
  pages 135--162. Amer. Math. Soc., Providence, RI, 1991.

\bibitem[Eli90]{E}
Yakov Eliashberg.
\newblock Topological characterization of {S}tein manifolds of dimension
  {$>2$}.
\newblock {\em Internat. J. Math.}, 1(1):29--46, 1990.

\bibitem[GPS]{GPS}
Sheel Ganatra, John Pardon, and Vivek Shende.
\newblock Microlocal {M}orse theory of wrapped {F}ukaya categories.
  arxiv:1809.08807 [math.sg].

\bibitem[Kon]{K}
Maxim Kontsevich.
\newblock Symplectic geometry of homological algebra.

\bibitem[Nad]{N2}
David Nadler.
\newblock Non-characteristic expansion of {L}egendrian singularities.
  arxiv:1507.01513.

\bibitem[Nad09]{N0}
David Nadler.
\newblock Microlocal branes are constructible sheaves.
\newblock {\em Selecta Math. (N.S.)}, 15(4):563--619, 2009.

\bibitem[Nad17]{N1}
David Nadler.
\newblock Arboreal singularities.
\newblock {\em Geom. Topol.}, 21(2):1231--1274, 2017.

\bibitem[NZ09]{NZ}
David Nadler and Eric Zaslow.
\newblock Constructible sheaves and the {F}ukaya category.
\newblock {\em J. Amer. Math. Soc.}, 22(1):233--286, 2009.

\bibitem[Wei91]{W}
Alan Weinstein.
\newblock Contact surgery and symplectic handlebodies.
\newblock {\em Hokkaido Math. J.}, 20(2):241--251, 1991.

\end{thebibliography}
	\bibliographystyle{alpha}

\end{document}